\documentclass[11pt]{amsart}
\usepackage[T1]{fontenc}
\usepackage[utf8]{inputenc}
\usepackage{lmodern,amsmath,amssymb,mathtools,microtype}
\usepackage[a4paper,textwidth=155mm,textheight=235mm,centering]{geometry}
\usepackage[hidelinks,unicode,hypertexnames=false]{hyperref}
\hypersetup{pdftitle={Metric Poissonian pair correlation for n + log n},pdfauthor={}}
\numberwithin{equation}{section}
\newtheorem{theorem}{Theorem}[section]
\newtheorem{proposition}[theorem]{Proposition}
\newtheorem{lemma}[theorem]{Lemma}

\newcommand{\ee}{\mathrm e}
\newcommand{\dd}{\mathrm d}
\newcommand{\RR}{\mathbb R}
\newcommand{\ZZ}{\mathbb Z}
\newcommand{\NN}{\mathbb N}

\newcommand{\loc}{\mathrm{loc}}

\allowdisplaybreaks[2]
\title[Metric pair correlation for $n+\log n$]{%
Metric Poissonian Pair Correlation Property\\
Part 1: The Case \(n+\log n\)}
\author{Bo Tan, \ \  Zhen Zhang, \ \ Qing-Long Zhou}
\date{}
\subjclass[2020]{Primary 11K06; Secondary 11L07, 11J83}
\keywords{Poissonian pair correlation, metric number theory, approximate additive energy, Mellin transform}
\begin{document}
\raggedbottom
\begin{abstract}
We prove that $(\alpha(n+\log n))_{n\ge1}$ has Poissonian pair correlation for Lebesgue almost every real $\alpha$. This solves a Problem  posed by Aistleitner, El-Baz and Munsch [Open Problem 2, Geom. Funct. Anal. 2021]. 
\end{abstract}
\maketitle

\section{Introduction}

This paper is the first of two papers devoted to metric results for the Poissonian pair correlation (PPC) property. Together, they address two open problems concerning the metric behaviour of PPC in this area, namely the \(n+\log n\) and \(n\log n\) cases.

The present paper treats the \(n+\log n\) case, while the companion paper treats the \(n\log n\) case. Before stating our results, we review the background and significance of both problems and place the \(n+\log n\) result in the broader picture. The companion paper will refer back to this paper, thereby connecting the two results and completing the overall program.

The distribution of sequences modulo one has been studied systematically
since the work of Weyl \cite{Weyl}. A real sequence $(a_n)_{n\ge1}$ is
\emph{uniformly distributed modulo one} if
\[
 \frac1N\#\{1\le n\le N:\{a_n\}\in[u,v)\}\longrightarrow v-u
 \qquad(0\le u<v\le1),
\]
where $\{x\}=x-\lfloor x\rfloor$ denotes the fractional part of $x$.
Equivalently, for every continuous function $\varphi:[0,1]\to\RR$,
\[
 \frac1N\sum_{n=1}^N\varphi(\{a_n\})
 \longrightarrow\int_0^1\varphi(u)\dd u.
\]
In particular, the irrational rotation sequence  $(n\alpha)_{n\ge1}$ is uniformly distributed modulo one
for every irrational $\alpha$. Uniform distribution describes the
proportion of points in each fixed interval and is therefore a
macroscopic distribution property. It does not, by itself, describe
how the points are spaced relative to one another.

Pair correlation measures these relative positions at the scale
$N^{-1}$, the mean spacing of $N$ points in the unit interval. For a sequence $(a_n)_{n\ge1}$, define
\[
 R_N(s;(a_n))=\frac1N\#\left\{1\le m\ne n\le N:
       \|a_m-a_n\|_{\RR/\ZZ}\le\frac{s}{N}\right\},
 \qquad s\ge0,
\]
where $\|u\|_{\RR/\ZZ}=\min_{k\in\ZZ}|u-k|$ is the distance to the
nearest integer. The count is over ordered pairs of distinct indices.
The sequence has \emph{Poissonian pair correlation} (PPC) if
$R_N(s;x)\to2s$ for every $s\ge0$.
As with uniform distribution, this property has an equivalent
formulation in terms of continuous test functions:
\[
 \frac1N\sum_{1\le m\ne n\le N}\sum_{\ell\in\ZZ}
 g\bigl(N(a_m-a_n+\ell)\bigr)
 \longrightarrow\int_\RR g(u)\dd u
 \qquad\text{for every }g\in C_c(\RR),
\]
where $C_c(\RR)$ denotes the space of real-valued continuous
functions on $\RR$ with compact support; see
Marklof~\cite[(2.5)--(2.6) and (A.1)]{Marklof}.
The periodized sum is symmetric under interchange of $m$ and $n$,
so it is enough to test even functions.

Independent uniformly distributed points in $[0,1)$ satisfy the
PPC limit almost surely. Thus PPC describes
a form of local pseudorandomness: a deterministic sequence has the
same limiting pair count as the independent uniform model at the
scale of the mean spacing.

Local correlation statistics also arise in the study of quantum
energy levels. The Berry--Tabor conjecture~\cite{BT} predicts
Poissonian local statistics for suitably normalized spectra of
generic quantum systems whose classical dynamics are integrable.
Pair correlation is one of the statistics considered in this
setting. For an overview of pair correlation problems for
sequences modulo one and their connections with uniform
distribution, see Larcher and Stockinger~\cite{LSsurvey}.

PPC implies uniform distribution, as proved independently by
Aistleitner, Lachmann and Pausinger~\cite[Theorem~1]{ALP} and
Grepstad and Larcher~\cite[Corollary~1.2]{GL}. The converse does not
hold. The  sequence $(\{n\alpha\})_{n\ge1}$ is
uniformly distributed but does not have PPC; see also
Grepstad and Larcher~\cite{GL}.
Consequently, the passage from fixed intervals to intervals of length
comparable to $N^{-1}$ distinguishes sequences that have the same
macroscopic distribution.

Quantitative relations between pair correlation and uniform
distribution have also been studied. Discrepancy measures the
largest difference between the empirical proportion of points
in an interval and the length of that interval.
Steinerberger~\cite{Steinerberger} obtained discrepancy bounds
from upper bounds for the pair-correlation statistic over
specified ranges of the scale parameter. He also related
discrepancy to the mean-square deviation of pair correlation
from the Poissonian value.

The fractional parts of square roots provide a concrete example of
this distinction. The full sequence $(\{\sqrt n\})_{n\ge1}$ does not
have PPC. Indeed, each perfect square contributes the point zero; 
the distinct ordered pairs among these points give
\[
 R_N(s;(\{\sqrt n\}))\ge
 \frac{\lfloor\sqrt N\rfloor(\lfloor\sqrt N\rfloor-1)}{N}
 \longrightarrow1
 \qquad(s>0).
\]
For $0<s<1/2$, this lower bound is incompatible with the Poissonian
limit $2s$. In contrast, El-Baz, Marklof and Vinogradov
\cite[Theorem~1]{EMV} proved that the sequence obtained by deleting
the perfect squares has PPC. More precisely, if \(n_1<n_2<\cdots\) is the increasing enumeration of the positive integers that are not perfect squares, then $(\{\sqrt{n_k}\})_{k\ge1}$ has PPC. This gives a deterministic sequence with PPC. The square indices
have density zero, but the coincident pairs associated with
them contribute a quantity of order $N$ to the pair count.
Uniform distribution, on the other hand, is unchanged by
modifications on a set of indices of density zero.

The nearest-neighbour spacing distribution of the square-root
sequence was determined by Elkies and McMullen~\cite{EM}.
They proved that the limiting distribution, after rescaling
spacings by their mean, is not exponential. Deleting the square
indices does not change this limiting spacing distribution:
among the first $N$ terms, only $O(\sqrt N)$ gaps are affected,
and the number of retained terms is asymptotic to $N$.
Thus the sequence obtained by deleting the squares has
Poissonian pair correlation but a non-exponential limiting
nearest-neighbour spacing distribution.

For quadratic sequences, Heath-Brown~\cite{HB} proved that
$(\alpha n^2)_{n\ge1}$ has PPC for almost every $\alpha$ and
constructed a dense set of parameters for which the same
conclusion holds. This provides a construction of admissible
parameters in addition to the almost-everywhere statement.

For a general arithmetic sequence, establishing PPC at a specified
parameter can require information that is not supplied by uniform
distribution. A complementary approach is to introduce a real
parameter and ask for the property outside a set of Lebesgue measure
zero. A fixed sequence $(a_n)_{n\ge1}$ of distinct real numbers is
called \emph{metric Poissonian} if $(\alpha a_n)_{n\ge1}$ has PPC
for Lebesgue almost every $\alpha\in\RR$. The exceptional set is
allowed to depend on the sequence, but outside it the pair-correlation
limit holds simultaneously for all $s\ge0$. This formulation makes
it possible to estimate the fluctuations of the pair count by
averaging over $\alpha$.

Rudnick and Sarnak~\cite{RS} proved that $(n^d)_{n\ge1}$ is metric
Poissonian for every integer $d\ge2$.
Another class is given by lacunary sequences, for which the
ratio of consecutive positive terms is bounded below by a
constant greater than one. Rudnick and Zaharescu~\cite{RZlac}
proved that integer lacunary sequences have Poissonian
correlations of every finite order for almost every dilation
parameter. Rudnick and Technau~\cite{RTlac} established metric
PPC for positive real-valued lacunary sequences.

Metric questions can also be formulated with the exponent as
the parameter. Technau and Yesha~\cite{TY} proved that
$(\{n^\theta\})_{n\ge1}$ has PPC for almost every $\theta>7$.
Here the exceptional set is a set of exponents, whereas in
the dilation problem the sequence $(a_n)_{n\ge1}$ is fixed
and the parameter is the multiplier $\alpha$.

A general criterion for integer
sequences was subsequently obtained by Aistleitner, Larcher and
Lewko~\cite{ALL}. For a sequence of distinct integers, write
$A_N=\{a_1,\ldots,a_N\}$ and define its \emph{additive energy} by
\[
 E(A_N)=\#\{(n_1,n_2,n_3,n_4)\in\{1,\ldots,N\}^4:
 a_{n_1}+a_{n_2}=a_{n_3}+a_{n_4}\}.
\]
Equivalently, this counts pairs of equal differences, with their
multiplicities. One always has $N^2\le E(A_N)\le N^3$; arithmetic
progressions have energy of order $N^3$. Aistleitner, Larcher and Lewko~\cite[Theorem~1]{ALL}
proved that the bound
\[
 E(A_N)\ll N^{3-\delta}
 \qquad\text{for some fixed }\delta>0
\]
implies the metric Poissonian property. No polynomial growth
assumption on the sequence is needed for this conclusion.

The energy condition is expressed in terms of additive relations
among the initial segments of the sequence. In the proof of Aistleitner, Larcher and
Lewko~\cite{ALL}, these relations enter through a second moment
of the Fourier expansion of the pair count.
Bloom, Chow, Gafni and Walker~\cite{BCGW} obtained a criterion
with a logarithmic saving under an additional density
assumption. Bloom and Walker~\cite{BW} proved that a sufficiently
large logarithmic saving from $N^3$ suffices without such a
density assumption.

There are also results describing the failure of metric PPC.
For increasing sequences of positive integers,
Larcher and Stockinger~\cite{LSenergy} proved that if
$E(A_N)\ge cN^3$ for some $c>0$ and infinitely many $N$,
then $(\alpha a_n)_{n\ge1}$ does not have PPC for any real
$\alpha$. The condition $E(A_N)=o(N^3)$ alone does not imply
metric PPC, as shown by Bourgain's construction in the appendix
to~\cite{ALL}. Walker~\cite{Walker} proved that
$(\alpha p_n)_{n\ge1}$, where $p_n$ is the $n$th prime,
fails to have PPC for almost every $\alpha$.
Lachmann and Technau~\cite{LTsets} constructed integer sequences
with
$E(A_N)\asymp N^3/(\log N\log\log N)$
for which PPC fails for almost every $\alpha$, and obtained
Hausdorff dimension estimates for exceptional sets.

Aistleitner, Lachmann and Technau~\cite{ALT} constructed,
for each prescribed $\varepsilon>0$, a metric Poissonian
sequence of positive integers satisfying
$E(A_N)\gg N^3/(\log N)^{3/4+\varepsilon}$.
In particular, convergence of
$\sum_{N\ge1}E(A_N)/N^4$ is not a necessary condition for
metric PPC.

For real sequences, approximate additive relations enter in place
of exact integer equalities. Aistleitner, El-Baz and Munsch~\cite{AEM}
developed criteria involving the number of four-term relations
that hold to within a prescribed tolerance. Their method combines
Fourier analysis, approximate additive energy and estimates for
moments of the Riemann zeta function.

The behavior of approximate energy differs from that of exact
energy for integer sequences. Lachman and Regavim~\cite{LR}
used a probabilistic construction to obtain increasing sequences
$(a_n)_{n\ge1}$ of real numbers with a uniform positive lower
bound for consecutive gaps, such that
\[
 \#\left\{1\le n_1,n_2,n_3,n_4\le N:
 \left|a_{n_1}-a_{n_2}+a_{n_3}-a_{n_4}\right|<1\right\}
 \asymp N^3,
\]
while $(a_n)_{n\ge1}$ is metric Poissonian.
Their construction also gives metric Poissonian correlations
of every finite order. Thus a cubic count of approximate
additive relations at unit scale can coexist with metric PPC
for real sequences.

As an application of their criteria, Aistleitner, El-Baz and
Munsch proved that $(n^\theta)_{n\ge1}$ is metric Poissonian
for every fixed $\theta>1$~\cite[Theorem~3]{AEM}.
Aistleitner, El-Baz and Munsch~\cite[Section~9]{AEM} posed
three model problems, listed as Open Problems~1--3, concerning
\[
 a_n=n^\theta\quad(0<\theta<1),\qquad
 a_n=n+\log n,\qquad
 a_n=n\log n.
\]
These cases lie outside the range of the stated applications and
include sequences with sublinear or nearly linear growth.

Rudnick and Technau~\cite[Theorem~1.1]{RT} settled the first of
these problems: for every fixed $0<\theta<1$, the sequence
$(\alpha n^\theta)_{n\ge1}$ has PPC for almost every real $\alpha$.
Together with the result for $\theta>1$, this gives the metric
statement for each fixed positive power other than the linear
case. The exponent is fixed, and the exceptional set of multipliers
may depend on it.
For a restricted range of exponents, the conclusion holds for
every nonzero multiplier. Lutsko, Sourmelidis and Technau
\cite[Theorem~1.1]{LST} proved PPC for every $\alpha>0$ and
every $0<\theta<14/41$, and Radziwi{\l}{\l} and
Shubin~\cite[Theorem~1.1]{RadSh} extended this range to
$0<\theta<43/117$. Negative multipliers are covered by the
symmetry of the pair count. Thus these results require no
exceptional set of nonzero multipliers in their respective
exponent ranges and, in particular, provide explicit sequences
with PPC by taking $\alpha=1$.

For another family of real sequences, Lutsko and
Technau~\cite{LTslow} proved that, for every $A>1$ and every
$\alpha>0$, the sequence
$(\{\alpha(\log n)^A\})_{n\ge1}$ has Poissonian correlations
of every finite order and an exponential limiting distribution
of normalized nearest-neighbour spacings.

The remaining two models ask how local pair
statistics change under a logarithmic perturbation of a linear
sequence or a logarithmic increase in its growth.

The present paper and its forthcoming companion~\cite{PaperII}
resolve these other two problems. This is the first paper of the
pair and treats $n+\log n$; the second will present the result for
$n\log n$. The two proofs share a localized analytic reduction
(while the companion paper requires further estimates for its
four-point count). For the present sequence,
\[
 \frac{n+\log n}{n}\longrightarrow1,
 \qquad
 (n+1+\log(n+1))-(n+\log n)\longrightarrow1.
\]
Thus both its size and its consecutive gaps approach those of a
linear sequence, although its metric pair correlation is Poissonian.
Our main result is the following.

\begin{theorem}\label{thm:main}
For Lebesgue almost every $\alpha\in\RR$, simultaneously for every
$s\ge0$,
\[
 \lim_{N\to\infty}\frac{\sharp\left\{1\le m\ne n\le N:
 \left\|\alpha\left(a_n-a_m\right)\right\|_{\RR/\ZZ}
 \le\frac{s}{N}\right\}}{N}=2s.
\]
Here and throughout, the symbol \(\sharp A\) denotes the number of elements of a finite set $A$.
\end{theorem}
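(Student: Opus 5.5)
Throughout, write $a_n=n+\log n$. We follow the standard metric scheme of Rudnick--Sarnak and Aistleitner--El-Baz--Munsch: a variance bound over $\alpha$, followed by Borel--Cantelli along a sparse sequence of $N$.

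\textbf{Reduction.} By symmetry, and since $\RR$ is a countable union of bounded intervals, it suffices to treat $\alpha$ in a fixed interval $[A,2A]$ with $A\ge1$. Fix an even test function $g$ in the Schwartz class with compactly supported Fourier transform $\widehat g$. Set
\[
 R_N(g,\alpha)=\frac1N\sum_{m\ne n\le N}\sum_{\ell\in\ZZ}g\bigl(N(\alpha(a_m-a_n)+\ell)\bigr)
 =\frac1{N^2}\sum_{k\in\ZZ}\widehat g(k/N)\sum_{m\ne n\le N}e(k\alpha(a_m-a_n)).
\]
The $k=0$ term contributes $\widehat g(0)(N-1)/N\to\int g$. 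Write $\phi$ for a smooth weight on $[A,2A]$. Averaging over $\alpha$ against $\phi$, the terms with $k\neq0$ contribute $O(N^{-2}\sum_k\sum_{m\ne n}|k(a_m-a_n)|^{-B})$. Because $|a_m-a_n|\ge|m-n|/2$, this is $o(1)$.

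The goal is then the variance bound
\[
 \int\Bigl|R_N(g,\alpha)-\textstyle\int g\Bigr|^2\phi(\alpha)\,\dd\alpha\ll N^{-\delta}
\]
for some $\delta>0$. Given this, Borel--Cantelli along $N_j=\lfloor j^{C}\rfloor$ gives almost sure convergence on that subsequence. Monotonicity of the counting function in $N$ and $s$, together with $N_{j+1}/N_j\to1$, then interpolates to all $N$ and all $s$ simultaneously. A countable dense family of $g$ approximating indicator functions handles every $s\ge0$ at once.

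\textbf{Variance as approximate energy.} Expanding the square and integrating in $\alpha$ gives
\[
 \frac1{N^4}\sum_{k,k'\ne0}\widehat g(k/N)\overline{\widehat g(k'/N)}\sum_{m\ne n,\ m'\ne n'}\widehat\phi\bigl(k(a_m-a_n)-k'(a_{m'}-a_{n'})\bigr).
\]
Since $\widehat\phi$ decays rapidly, this is essentially $N^{-4}$ times the number of tuples with $|k|,|k'|\le N$ satisfying $|k(a_m-a_n)-k'(a_{m'}-a_{n'})|\le N^{\varepsilon}$. Now write $a_m-a_n=(m-n)+\log(m/n)$. The integer part gives the exact relation $k(m-n)=k'(m'-n')+j$ with $|j|$ small, while the logarithmic parts must satisfy $|k\log(m/n)-k'\log(m'/n')-j|\lesssim N^\varepsilon$. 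This is where the sequence differs from $n$: the linear sequence has cubic energy precisely because these logarithmic terms are absent.

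To extract cancellation, we localize dyadically: $m,n\sim M$ and $d=m-n$, so that $\log(m/n)\approx d/M$. The condition then says that $kd/M$ is close to $k'd'/M'$ modulo the integer relation. The plan is to show the count is $\ll N^{4-\delta}$. This means that, apart from a diagonal of size $O(N^{4-\delta})$, the integer identity $kd\approx k'd'$ together with the constraint $|kd/M-k'd'/M'|\lesssim N^\varepsilon$ pins down too many parameters.

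\textbf{Main obstacle.} The hardest case is the near-diagonal one: $k(m-n)=k'(m'-n')$ exactly, with $M\approx M'$. There the logarithmic correction is small, and the counting must use the second-order terms in $\log(m/n)=d/n-d^2/(2n^2)+\dots$. Here we would go analytic instead of counting. We write the inner sum as $\sum_{n}e(k\alpha\log n)\,e(k\alpha n)$ and apply the Mellin transform or Perron's formula, which relates it to $\zeta(\tfrac12+it)$-type sums at height $t\approx k\alpha$. We then bound the resulting expression by fourth-moment estimates for $\zeta$ on short intervals, in the manner of Aistleitner--El-Baz--Munsch. This should give a power saving $N^{-\delta}$ after summing over dyadic blocks and over the integer relation. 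The off-diagonal cases, where $M\neq M'$ or $j\ne0$, should follow from elementary divisor-bound counting.
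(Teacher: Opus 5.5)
The proposal breaks down at its central step: the count you need to be $\ll N^{4-\delta}$ is in fact $\gg N^4$. After bounding $\widehat\phi$ by its absolute value, you need the number of tuples with $1\le|k|,|k'|\le N$ and $|k(a_m-a_n)-k'(a_{m'}-a_{n'})|\le N^{\varepsilon}$ to be $\ll N^{4-\delta}$. This is false for every sequence with $a_n\ll n$.

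To see this, note that the $\asymp N^3$ values $k(a_m-a_n)$ with $1\le k\le N$ and $1\le n<m\le N$ all lie in $(0,2N^2)$. Cutting this interval into cells of length $N^{\varepsilon}$ and applying Cauchy's inequality gives $\gg N^{4+\varepsilon}$ such pairs. The same argument, restricted to $1\le k,k'\le cN$ where $|\widehat g(k/N)|\gg1$, still gives $\gg N^4$ pairs. So your upper bound for the variance is $\gg1$, whatever finer structure the sequence has beyond linear growth.

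These near-coincidences are generic, not diagonal. The exact relations $kd=k'd'$ contribute only $\ll N^{3+\varepsilon}$. The bulk comes from $kd-k'd'=j$ with $j\ne0$ and $|j|$ up to about $N\log N$, since $k\log(m/n)$ can be that large; so your claim that $j$ is ``small'' is wrong. No divisor-bound counting of the cases $j\ne0$ or $M\ne M'$ can remove them.

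The variance on $[A,2A]$ is small only because of cancellation. Since $\phi$ vanishes at $0$, we have $\int_\RR\widehat\phi=\phi(0)=0$, and this annihilates the smoothly distributed bulk of the values $k(a_m-a_n)$. Putting absolute values on $\widehat\phi$ discards exactly this. Your fallback paragraph does not repair the gap. It aims at the exact diagonal $k(m-n)=k'(m'-n')$, which is not where the $\gg N^4$ mass lives. It also applies Mellin to $\sum_n\ee(k\alpha(n+\log n))$ without identifying a source of saving.

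The paper extracts this cancellation with a localized Mellin transform.
\begin{enumerate}
\item Split dyadically into blocks with $j\asymp J$ and $\Delta_h(n)\asymp H$, and put $X=JH$. Mellin inversion of $\psi(x/X)\ee(\alpha x)$ writes each block as $\int K_X(\alpha,t)D_J(t)Z_H(t)\,\dd t$, where $D_J(t)=\sum_j\widehat g(j/N)j^{it}$ and $Z_H(t)=\sum\Delta_h(n)^{it}$.
\item For $\alpha\in[a,b]$ with $a>0$, the kernel is negligible unless $t\asymp JH$. This removes the small-$t$ range, where $D_J$ has no cancellation.
\item On $t\asymp JH$, the third derivative estimate gives $|D_J(t)|\ll(1+J/N)^{-2}(J^{2/3}H^{1/6}+J^{5/6}H^{-1/6})$.
\item Parseval in $\alpha$ reduces $\|F_{H,J}\|_{L^2(I)}^2$ to $(JH)^{-1}\int_{t\asymp JH}|D_JZ_H|^2\,\dd t$, up to an $O(N)$ error in the norm.
\item A Gaussian majorant gives $\int_{t\asymp JH}|Z_H|^2\,\dd t\ll H(J+N)\sharp\mathcal E_N(2/N)$.
\item The four-point count at the fine scale $2/N$ is only $\ll N^2\log^2N$, even though it is cubic at unit scale. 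This holds because $|h_1-h_2|\le2$, and $m\mapsto\Delta_{h_2}(m)$ is monotone with $|\Delta_{h_2}'(m)|\ge H/(4P(P+H))$.
\end{enumerate}
Together these yield the variance bound $\ll N^{-1/3}\log^2N$, after which your Borel--Cantelli and interpolation step goes through. A minor point: intervals $[A,2A]$ with $A\ge1$ do not cover $(0,1)$, so you should take all dyadic $A>0$.
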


For a real-valued even function $g\in C_c^\infty(\RR)$, set
\begin{equation}\label{eq:smooth-statistic}
 R_N(g;\alpha)=\frac1N\sum_{1\le m\ne n\le N}
       \sum_{\ell\in\ZZ}
       g\bigl(N(\alpha(a_n-a_m+\ell)\bigr).
\end{equation}
We prove a quantitative estimate for this smoothed statistic.

\begin{theorem}\label{thm:mean-square}
Let $I=[a,b]$ with $0<a<b<\infty$, and let
$g\in C_c^\infty(\RR)$ be real-valued and even. Then, for $N\ge2$,
\begin{equation}\label{eq:mean-square}
 \int_I\left|R_N(g;\alpha)-\int_\RR g(u)\dd u\right|^2\dd\alpha
 \ll_{I,g} N^{-1/3}\log^2(2N).
\end{equation}
\end{theorem}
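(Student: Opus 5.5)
We will follow the Fourier-analytic route of Rudnick--Sarnak and Aistleitner--El-Baz--Munsch, localized in the difference $a_n-a_m$.

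\textbf{Step 1: Fourier expansion.} By Poisson summation in $\ell$,
\[
 R_N(g;\alpha)=\frac1{N^2}\sum_{k\in\ZZ}\widehat g\Bigl(\frac kN\Bigr)\sum_{1\le m\ne n\le N}\ee\bigl(k\alpha(a_n-a_m)\bigr),
\]
up to the normalization convention in \eqref{eq:smooth-statistic}. The $k=0$ term equals $\widehat g(0)(N^2-N)/N^2=\int g+O(1/N)$. Since $\widehat g$ decays rapidly, the range $|k|>N^{1+\varepsilon}$ contributes $O(N^{-A})$.

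\textbf{Step 2: mean square.} Insert a smooth nonnegative weight $w\ge \mathbf 1_I$ supported in $[a/2,2b]$. Expanding the square gives
\[
 \frac1{N^4}\sum_{k,k'\ne0}\widehat g\Bigl(\frac kN\Bigr)\overline{\widehat g\Bigl(\frac{k'}N\Bigr)}\sum_{m\ne n,\ m'\ne n'}\widehat w\bigl(k(a_n-a_m)-k'(a_{n'}-a_{m'})\bigr).
\]
Because $\widehat w$ decays rapidly, this is essentially $N^{-4}$ times the number of tuples $(k,k',m,n,m',n')$ with $|k|,|k'|\le N^{1+\varepsilon}$ and
\[
 \bigl|k(a_n-a_m)-k'(a_{n'}-a_{m'})\bigr|\le N^{\varepsilon}.
\]
This is a weighted approximate additive energy count.

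\textbf{Step 3: localization in the difference.} Write $a_n-a_m=h+\log(n/m)$ with $h=n-m$. Then
\[
 \log\frac nm=\frac hm+O\Bigl(\frac{h^2}{m^2}\Bigr),
\]
so the difference is $h$ plus a small perturbation of size about $h/m$. Split the pairs dyadically by $|h|\sim H$ and $m\sim M$. The count then requires
\[
 kh-k'h'+k\log\frac nm-k'\log\frac{n'}{m'}=O(N^{\varepsilon}).
\]

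\textbf{Step 4: the main obstacle.} The hard case is the near-diagonal one with $kh=k'h'$ exactly. This is where $n+\log n$ behaves like the linear sequence, and integer relations have multiplicity about $\tau(kh)$. The count then reduces to how often the logarithmic terms satisfy
\[
 \Bigl|k\log\frac nm-k'\log\frac{n'}{m'}\Bigr|\le N^\varepsilon .
\]
With $kh=k'h'=:r$, this is $|r(1/m-1/m')|\lesssim N^{\varepsilon}$ up to second-order errors. For each pair $(h,h')$, this confines $m'$ to an interval of length roughly $N^{\varepsilon}M^2/r+1$.

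To handle the off-diagonal case $kh\ne k'h'$, the integer part $|kh-k'h'|\ge1$ must be cancelled by the logarithmic part. That part has size at most $|k|H/M$, which forces $M\lesssim |k|H$. The plan is to bound these cases by a van der Corput or first-derivative estimate for $\sum_m \ee(\beta\log m)$. Equivalently, one can write $\widehat w$ as a Mellin integral and use mean values of $\sum_{m\sim M} m^{it}$, as in Aistleitner--El-Baz--Munsch.

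Summing over the dyadic ranges, we expect the losses to balance at the threshold $H\approx N^{2/3}$. There the diagonal count and the term-by-term trivial bound on short differences are each about $N^{4-1/3}$, and $\log^2$ factors arise from divisor sums over $r$. Dividing by $N^4$ then yields the bound $N^{-1/3}\log^2(2N)$ in \eqref{eq:mean-square}.

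The most delicate point is making the second-order Taylor errors and the transition region $M\approx |k|H$ uniform. I would try the Mellin-transform approach there first.
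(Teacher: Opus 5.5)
\textbf{There is a genuine gap, and it occurs already in Step~2.} You replace the mean square by $N^{-4}$ times the \emph{number} of tuples with $|k(a_n-a_m)-k'(a_{n'}-a_{m'})|\le N^{\varepsilon}$. After that, no decay is possible for this sequence:
\begin{itemize}
\item Take a range $k\asymp N$ on which $|\widehat g(k/N)|\gg1$. With $1\le m<n\le N$, there are $\asymp N^3$ numbers $k(a_n-a_m)$.
\item Since $0<a_n-a_m\le 2N$, all of them lie in an interval of length $O(N^2)$.
\item Cut that interval into $O(N^2)$ unit intervals and apply Cauchy's inequality. You get $\gg N^6/N^2=N^4$ pairs of tuples with $|kd-k'd'|<1$.
\end{itemize}
This uses only that the differences are $O(N)$. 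So your count is $\gg N^4$, and the resulting upper bound for the mean square is no better than $O(1)$. In particular, your closing numerology (both contributions about $N^{4-1/3}$ at $H\approx N^{2/3}$) cannot hold.

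You have also misidentified the obstacle. Your own interval bound $N^{\varepsilon}M^2/r+1$, summed with divisor weights, shows the exact diagonal $kh=k'h'$ contributes only $N^{3+O(\varepsilon)}$. The dominant part is the generic off-diagonal mass of size $N^4$, and it must \emph{cancel}. That cancellation comes from localizing $\alpha\in I$ away from $0$: $\widehat w$ oscillates and $\int_\RR\widehat w=w(0)=0$. Taking absolute values discards exactly this. It is the same obstruction Aistleitner--El-Baz--Munsch face for linear growth. A van der Corput estimate in $m$ cannot be applied after you have reduced to a count.

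\textbf{What the paper does instead.} It keeps the oscillation through a localized Mellin transform. It splits into dyadic blocks $j\sim J$ and $\Delta_h(n)\sim H$, and inverts $\psi(x/X)\ee(\alpha x)$ with $X=JH$. Because $\alpha\in I$, only Mellin heights $t\asymp_I JH$ matter. Parseval in $\alpha$ and in $\log x$ then gives
\[
 \|F_{H,J}\|_{L^2(I)}\ll_{I,g}(JH)^{-1/2}\Bigl(\int_{t\asymp JH}|D_J(t)Z_H(t)|^2\,\dd t\Bigr)^{1/2}+N .
\]
The saving comes from the \emph{frequency} sum $D_J(t)=\sum_{j\sim J}\widehat g(j/N)j^{it}$, not from a sum over sequence indices. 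At height $t\asymp JH$, the third-derivative estimate (Lemma~\ref{lem:frequency}) bounds it by $(1+J/N)^{-2}(J^{2/3}H^{1/6}+J^{5/6}H^{-1/6})$ rather than $J$. This is where the exponent $1/3$ comes from.

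Only the gap factor is turned into a count: $\int_{t\asymp JH}|Z_H|^2\ll H(J+N)\sharp\mathcal E_N(2/N)$. This is a four-point count at scale $1/N$, not at scale $1$, where it is $\gg N^3$. The dyadic-box argument of Lemma~\ref{local-energy} shows it is $\ll N^2\log^2(2N)$: $h_1-h_2$ takes at most five values, and monotonicity of $\Delta_{h_2}$ then pins down $m$. Summing over blocks gives $N^{-7/3}\cdot N^2\log^2(2N)$. Your proposal contains neither of these two ingredients.
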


We describe the scale of the counting estimate used in the proof.
Let $\sharp\mathcal{E}_{N}(\gamma)$ count the ordered quadruples $1\le n_i\le N$
for which
\[
 |a_{n_1}-a_{n_2}+a_{n_3}-a_{n_4}|<\gamma.
\]
The scale-dependent criterion of Aistleitner, El-Baz and
Munsch~\cite[Theorem~2]{AEM} requires,
for some fixed $\delta>0$ and every $\eta>0$, a bound of the form
\[
 \sharp\mathcal{E}_{N}(\gamma)\ll_{\eta,\delta}
 N^{2+\eta}+\gamma N^{3-\delta},
 \qquad 0<\gamma\le1,
\]
uniformly in $\gamma$, together with a uniform positive lower bound
for consecutive gaps. The latter condition holds here, but the
energy condition does not: for $f(n)=n+\log n$,
\[
 \sharp\mathcal{E}_{N}(1)\gg N^3.
\]
To see this, fix an integer $1\le h\le N/4$ and take two integer
starting points in $[N/2,3N/4]$. Each of the corresponding differences
\[
 a_{n+h}-a_{n}=h+\log(1+h/n)
\]
has its logarithmic term in $[0,\log(3/2)]$. The two differences
therefore differ by less than $1$. There are $\gg N$ choices of
$h$ and $\gg N^2$ choices of the two starting points. This also
prevents an application of the unit-scale energy criterion
of Aistleitner, El-Baz and Munsch~\cite[Theorem~1]{AEM},
or of the later unit-scale criterion of Kerr and
Wang~\cite[Corollary~2.2]{KW}.

A related feature appears in the parameter average. Fourier
expansion of a second moment leads to near coincidences between
\[
 j_1\bigl(a_{n_1}-a_{n_2}\bigr)
 \quad\text{and}\quad
 j_2\bigl(a_{n_3}-a_{n_3}\bigr),
\]
with two integer frequencies and four sequence indices.
The argument of Aistleitner, El-Baz and Munsch~\cite{AEM}
separates the frequency variables from
the differences and estimates the resulting expressions using
approximate energy and zeta moments. For sequences of linear
growth, Section~9 of that paper identifies a contribution from
parameters near $\alpha=0$ which prevents decay of the global
variance used there. Removing a neighborhood of zero is
incompatible with the nonnegative Fourier transform of the
averaging measure in that argument.

Our proof estimates the second moment on each fixed compact
interval $I\subset(0,\infty)$. We first divide the positive
frequencies and positive differences into dyadic ranges and apply
Mellin inversion before taking the second moment. For a block with
frequencies $j\asymp J$ and positive gaps
$\Delta_h(n)=a_{n+h}-a_{n}\asymp H$, this separates the frequency sum
from the sum over differences while retaining the localization
in $\alpha$. The relevant Mellin parameter lies in a positive
band $t\asymp_I JH$. A third derivative estimate controls the
frequency sum on this band. Parseval's identity in $\alpha$,
followed by a mean-square estimate in logarithmic coordinates,
reduces the remaining factor to a four-point count at scale
$N^{-1}$.

More precisely, Proposition~\ref{prop:reduction} gives
\[
 \int_I\left|R_N(g;\alpha)-\int_\RR g(\alpha)\dd \alpha\right|^2\dd\alpha
 \ll_{I,g}N^{-7/3}\sharp\mathcal{E}_N(2/N)+N^{-2}\log^4(2N).
\]
The arithmetic estimate proved in this paper is
\begin{equation}\label{eq:intro-energy}
 \sharp\mathcal{E}_{N}(\gamma)\ll N^2+\gamma N^3\log^2(2N),
 \qquad N\ge2,\quad 0<\gamma\le1,
\end{equation}
with an absolute implied constant independent of $N$ and $\gamma$.
In a common dyadic box, the identity
$\Delta_h(n)=h+\log(1+h/n)$ restricts $h-y$ to a bounded set of integers
whenever $\Delta_h(n)$ and $\Delta_y(m)$ are within $\gamma\le1$.
After fixing $n,h,y$, monotonicity and a lower bound for
$|\Delta_y'(m)|$ control the remaining variable. A square-sum argument
combines the boxes. At $\gamma=2/N$, estimate~\eqref{eq:intro-energy}
becomes
\[
 \sharp\mathcal{E}_N(2/N)\ll N^2\log^2(2N),
\]
which gives the power saving in Theorem~\ref{thm:mean-square}.
Thus the proof uses the almost quadratic count at scale $N^{-1}$,
even though the count at unit scale is cubic. The parameter
localization and the dependence of the reduction on this finer
counting scale are the analytic features used for the nearly
linear sequence considered here.

Section~\ref{sec:energy} proves the four-point estimate.
Section~\ref{sec:reduction} establishes the analytic reduction,
including the third derivative estimate in the form needed here,
and Section~\ref{sec:completion} proves
Theorems~\ref{thm:mean-square} and~\ref{thm:main}.
We write $\NN=\{1,2,\ldots\}$ and $\ee(t)=\exp(2\pi it)$.
All dyadic parameters belong to $\{1,2,4,\ldots\}$.
The notation $U\ll V$ means $|U|\le CV$ for an absolute constant
$C$; subscripts indicate permitted dependencies.
In the analytic estimates, $I$ and $g$ are fixed.

\section{A four-point counting estimate}\label{sec:energy}

Recall the definition of the sequence \(a_n = n + \log n\). For \(h \in \mathbb{N}\) and real \(n \ge 1\), the gap function is given by
\begin{equation*}
    \Delta_h(n) := a_{n+h} - a_n = h + \log\left(1 + \frac{h}{n}\right).
\end{equation*}
Since \(\log(1+x) \le x\) for all \(x > -1\), it is readily checked that
\begin{equation*}
    h < \Delta_h(n) \le 2h.
\end{equation*}

For \(H, P \in \mathbb{N}\) and \(\gamma \in (0,1]\), we define the set
\begin{align*}
\mathcal{E}_{P,H}(\gamma)
&:= \left\{ (n,h_1,m,h_2) \in \mathbb{N}^4 : 
\begin{array}{l}
P \le n,m < 2P,\ H \le h_1,h_2 < 2H,\\
|\Delta_{h_1}(n) - \Delta_{h_2}(m)| \le \gamma
\end{array} \right\}.
\end{align*}
The following lemma provides an estimate for the cardinality of 
\(\mathcal{E}_{P,H}(\gamma)\).
\begin{lemma}\label{local-energy}
\[\sharp\mathcal E_{P,H}(\gamma)
 \le 5PH+40\gamma P^2(P+H).\]
\end{lemma}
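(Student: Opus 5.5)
The plan is to fix three of the four variables and use monotonicity of the gap function in the fourth. The constants $5$ and $40$ should come out exactly: $5$ from the number of admissible values of $h_2-h_1$, and $40=5\cdot 8$ from the derivative bound.

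\emph{Step 1: $h_2-h_1$ takes at most five values.} For $(n,h_1,m,h_2)\in\mathcal E_{P,H}(\gamma)$ write
\[
\Delta_{h_1}(n)-\Delta_{h_2}(m)=(h_1-h_2)+\log\frac{1+h_1/n}{1+h_2/m}.
\]
Both ratios $h_1/n$ and $h_2/m$ lie in $(x/2,2x)$, where $x=H/P$. Hence the ratio of the two factors $1+h_i/n_i$ lies between $(1+x/2)/(1+2x)$ and $(1+2x)/(1+x/2)$. The latter is less than $4$ for all $x>0$, so the logarithmic term has absolute value less than $\log 4$. Therefore $|h_1-h_2|\le\gamma+\log 4<3$, which forces $h_2-h_1\in\{-2,-1,0,1,2\}$. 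So once $n$ and $h_1$ are fixed, there are at most $5$ choices of $h_2$, and at most $5PH$ triples $(n,h_1,h_2)$ in total.

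\emph{Step 2: count $m$ for fixed $(n,h_1,h_2)$.} The function $t\mapsto\Delta_{h_2}(t)=h_2+\log(1+h_2/t)$ is strictly decreasing on $[P,2P]$, with
\[
|\Delta_{h_2}'(t)|=\frac{h_2}{t(t+h_2)}\ge\frac{H}{2P\cdot 2(P+H)}=\frac{H}{4P(P+H)}.
\]
The admissible integers $m$ are those with $\Delta_{h_2}(m)$ in an interval of length $2\gamma$ centred at $\Delta_{h_1}(n)$. By monotonicity and the mean value theorem, they lie in a real interval of length at most $2\gamma\cdot 4P(P+H)/H$. That interval contains at most $1+8\gamma P(P+H)/H$ integers.

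\emph{Step 3: combine.} Multiplying the two counts gives
\[
\sharp\mathcal E_{P,H}(\gamma)\le 5PH\Bigl(1+\frac{8\gamma P(P+H)}{H}\Bigr)=5PH+40\gamma P^2(P+H),
\]
which is the claimed bound.

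The only step needing care is Step 1: the elementary bound on the log-ratio must hold uniformly in $H/P$, since $H$ may be much larger than $P$. This works because $(1+2x)/(1+x/2)<4$ for every $x>0$. The remaining steps are routine.
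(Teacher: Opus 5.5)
Your proof is correct and follows the paper's argument step for step. First, the $\log 4$ bound leaves at most five values of $h_2$ for fixed $(n,h_1)$; then monotonicity of $m\mapsto\Delta_{h_2}(m)$ with $|\Delta_{h_2}'|\ge H/(4P(P+H))$ gives at most $1+8\gamma P(P+H)/H$ values of $m$; finally you multiply by the $PH$ choices of $(n,h_1)$. Your check that $(1+2x)/(1+x/2)<4$ for $x=H/P$ also supplies the ``direct calculation'' that the paper leaves to the reader.
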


\begin{proof}
Fix $(n,h_1)\in\mathbb{N}^2$ with $n\in[P,2P)$ and $h_1\in[H,2H)$. 
For any $m\in[P,2P)$ and $h_2\in[H,2H)$, we have
\[
\Delta_{h_1}(n)-\Delta_{h_2}(m)
= h_1-h_2
+ \log\left(1+\frac{h_1}{n}\right)
- \log\left(1+\frac{h_2}{m}\right).
\]
A direct calculation gives
\[
\left|\log\left(1+\frac{h_1}{n}\right)
- \log\left(1+\frac{h_2}{m}\right)\right| < \log 4.
\]
Therefore, if $|\Delta_{h_1}(n)-\Delta_{h_2}(m)|\le\gamma$, then
\[
\gamma
\ge |\Delta_{h_1}(n)-\Delta_{h_2}(m)|
\ge |h_1-h_2|-\log 4.
\]
Thus $|h_1-h_2|\le \gamma+\log 4<3$ (since $\gamma\le 1$). 
As $h_1,h_2$ are integers, for fixed $h_1$ there are at most five 
possible integers $h_2$.

Now fix such an $h_2$. Consider the function
\[
m\longmapsto \Delta_{h_2}(m)
= h_2+\log\left(1+\frac{h_2}{m}\right),
\qquad m\in[P,2P].
\]
It is strictly decreasing, with derivative
\[
\Delta_{h_2}'(m)
= -\frac{h_2}{m(m+h_2)}.
\]
Since $m\in[P,2P)$ and $h_2\in[H,2H)$, we have
\[
|\Delta_{h_2}'(m)|
= \frac{h_2}{m(m+h_2)}
\ge \frac{H}{4P(P+H)}.
\]
Suppose $m_1,m_2\in[P,2P)$ both satisfy
$|\Delta_{h_1}(n)-\Delta_{h_2}(m_i)|\le\gamma$ for $i=1,2$. 
Then $|\Delta_{h_2}(m_1)-\Delta_{h_2}(m_2)|\le 2\gamma$. 
By the mean value theorem, there exists $\xi$ between $m_1$ and 
$m_2$ such that
\[
|\Delta_{h_2}(m_1)-\Delta_{h_2}(m_2)|
= |\Delta_{h_2}'(\xi)|\,|m_1-m_2|
\ge \frac{H}{4P(P+H)}\,|m_1-m_2|.
\]
Hence
\[
|m_1-m_2|
\le \frac{2\gamma\cdot 4P(P+H)}{H}
= \frac{8\gamma P(P+H)}{H}.
\]
Therefore all admissible real values of $m$ lie in an interval of 
length at most $8\gamma P(P+H)/H$, which contains at most
\[
1+\frac{8\gamma P(P+H)}{H}
\]
integers. Consequently, for fixed $(n,h_1)$, the number of pairs 
$(h_2,m)$ is at most
\[
5\left(1+\frac{8\gamma P(P+H)}{H}\right).
\]
Finally, there are $PH$ choices for $(n,h_1)$. Multiplying gives
\[
\sharp\mathcal{E}_{P,H}(\gamma)
\le PH \cdot 5\left(1+\frac{8\gamma P(P+H)}{H}\right)
= 5PH+40\gamma P^2(P+H).
\]
\end{proof}

For $N\ge2$ and $0<\gamma\le1$, define
\begin{equation}\label{de:energy}
  \mathcal{E}_{N}(\gamma)=
  \left\{(n_1,n_2,n_3,n_4)\in\{1,\ldots,N\}^4:
  |a_{n_1}-a_{n_2}+a_{n_3}-a_{n_4}|<\gamma\right\}.
\end{equation}
We also write $E^*_{N,\gamma}=\sharp\mathcal{E}_N(\gamma)$.

\begin{proposition}\label{energy}
For $N\ge2$ and $0<\gamma\le1$,
\begin{equation}\label{eq:energy-bound}
 \sharp\mathcal{E}_N(\gamma)
 \ll N^2+\gamma N^3\log^2(2N),
\end{equation}
with an absolute implied constant independent of $N$ and $\gamma$.
In particular,
\begin{equation}\label{eq:critical-energy}
 \sharp\mathcal{E}_N(2/N)
 \ll N^2\log^2(2N).
\end{equation}
\end{proposition}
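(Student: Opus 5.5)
We reduce the four-point count to the dyadic counts of Lemma~\ref{local-energy}. First we split off the degenerate quadruples. Write the relation as $(a_{n_1}-a_{n_2})+(a_{n_3}-a_{n_4})$, and note that $a$ is strictly increasing.

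\emph{Degenerate quadruples.} If $n_1=n_2$, then $|a_{n_3}-a_{n_4}|<\gamma\le1$. Since consecutive gaps exceed $1$, this forces $n_3=n_4$. The same holds with the roles of the pairs exchanged, so these quadruples contribute at most $N^2$.

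\emph{Nondegenerate quadruples.} Now $n_1\neq n_2$ and $n_3\neq n_4$. The two differences must have opposite signs, for otherwise the sum has absolute value greater than $2$. Up to a factor $2$ from symmetry, we may take $n_1=n+h_1$, $n_2=n$, $n_4=m+h_2$ and $n_3=m$ with $h_1,h_2\ge1$. The condition then reads $|\Delta_{h_1}(n)-\Delta_{h_2}(m)|<\gamma$.

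We place $n,m$ in dyadic ranges $[P,2P),[P',2P')$ and $h_1,h_2$ in $[H,2H),[H',2H')$, all at most $N$. This gives $O(\log^4 N)$ boxes, which is too many, so we need the square-sum argument mentioned in the introduction. For that, the boxes must be made common, with $P=P'$ and $H=H'$, in order to apply Lemma~\ref{local-energy}.

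\emph{Handling $H$.} Since $h_i<\Delta_{h_i}\le2h_i$ and the two gaps differ by less than $1$, $h_1$ and $h_2$ are within a factor of about $2$. So $H'$ lies among $O(1)$ neighbours of $H$. Better still, the proof of the lemma shows $|h_1-h_2|\le2$, so one can simply let $h_2$ range over $[H,2H+2)$; this is a harmless modification of Lemma~\ref{local-energy}.

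\emph{Handling $P$.} This is the main obstacle. The quantities $n$ and $m$ need not be comparable. To deal with it, use Cauchy--Schwarz in the form
\[
\sharp\{\,|X-Y|<\gamma\,\}\le\bigl(\sharp\{|X-X'|<\gamma\}\bigr)^{1/2}\bigl(\sharp\{|Y-Y'|<\gamma\}\bigr)^{1/2},
\]
up to harmless $\gamma$-rescaling. This follows by covering $\RR$ with intervals of length $\gamma$ and writing both counts as $\sum_k r_X(k)r_Y(k)$ over neighbouring cells. Applied to $X=\Delta_{h_1}(n)$ on box $(P,H)$ and $Y=\Delta_{h_2}(m)$ on box $(P',H)$, this bounds each cross-box count by $\sqrt{E_{P,H}E_{P',H}}$, with $E_{P,H}=\sharp\mathcal E_{P,H}(2\gamma)$, which Lemma~\ref{local-energy} controls.

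\emph{Summing.} Summing over $P,P'$ gives
\[
\Bigl(\sum_P E_{P,H}^{1/2}\Bigr)^2\le\log(2N)\sum_P E_{P,H}.
\]
By Lemma~\ref{local-energy},
\[
\sum_P E_{P,H}\ll\sum_P\bigl(PH+\gamma P^2(P+H)\bigr)\ll NH+\gamma N^2(N+H).
\]
Summing over $H$ then yields
\[
\log(2N)\bigl(N^2+\gamma N^3\log(2N)\bigr).
\]
This gives the second term $\gamma N^3\log^2(2N)$, but the first term carries an extra logarithm, $N^2\log N$, which is worse than the claimed $N^2$.

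\emph{Fixing the first term.} To recover $N^2$, treat the diagonal-type part separately. Lemma~\ref{local-energy}'s term $5PH$ comes from the $O(1)$ solutions per $(n,h_1)$, so it should not be passed through Cauchy--Schwarz. Instead, for fixed $(n,h_1)$, count directly: $h_2\in\{h_1-2,\dots,h_1+2\}$, and the admissible $m$ form an interval of length at most $\gamma n(n+h)/h$. Here $m$ is essentially determined, since $\log(1+h_2/m)$ is pinned within $\gamma$, and the derivative bound gives $1+O(\gamma m(m+h)/h)$ choices of $m$ in any dyadic block. The "$1$" term summed over the blocks of $m$ that actually contain solutions is $O(1)$ overall, because the target value $\log(1+h_2/m)$ is monotone in $m$ and so singles out one location.

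This direct count gives
\[
\ll\sum_{n,h_1}\Bigl(1+\gamma\max_m\frac{m(m+h_1)}{h_1}\Bigr)\ll N^2+\gamma N^3\sum_{h}\frac1h\ll N^2+\gamma N^3\log N.
\]
This even bypasses the Cauchy--Schwarz step. I would use it, keeping the square-sum route as a fallback. Taking $\gamma=2/N$ then gives \eqref{eq:critical-energy}.
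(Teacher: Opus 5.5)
There is a genuine gap in your preferred route, the direct count. It assumes that for fixed $(n,h_1)$ every admissible $h_2$ lies in $\{h_1-2,\dots,h_1+2\}$. Lemma~\ref{local-energy} gives $|h_1-h_2|<3$ only inside a common box: it is $n,m\in[P,2P)$ and $h_1,h_2\in[H,2H)$ that force $|\log(1+h_1/n)-\log(1+h_2/m)|<\log4$. When $n$ and $m$ are not comparable, which is exactly the obstacle you identified under ``Handling $P$'', this logarithmic difference can be as large as $\log(1+h_1)$.

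For instance, take $\gamma=1$, $h\ge20$, $h_2=h+\lfloor\log(1+h)\rfloor$ and $2h_2\le N$. The quadruple $(n,h_1,m,h_2)=(1,h,h_2,h_2)$ satisfies $|\Delta_{h_1}(1)-\Delta_{h_2}(h_2)|=|\{\log(1+h)\}-\log2|<1$, yet $h_2-h_1=\lfloor\log(1+h)\rfloor$. More to the point, for $n\ge h_1$ with $h_1$ large there are $\gg\log h_1$ integers $h_2<h_1-2$ for which $\Delta_{h_2}(m)=\Delta_{h_1}(n)$ has a real solution $m\in[1,N-h_2]$ (with $m$ small). So paying the term $1$ once per triple $(n,h_1,h_2)$ costs $\gg N^2\log N$, which is precisely the loss the direct count was meant to remove. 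The same false claim underlies your ``better still'' range $h_2\in[H,2H+2)$; in general only $h_1/3<h_2<3h_1$ holds. Your first description of the admissible $m$-interval, of length $\gamma n(n+h)/h$, shows the same conflation of $n$ with $m$.

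Your fallback square-sum route is essentially the paper's proof, and its extra logarithm is self-inflicted: do not apply Cauchy--Schwarz to $\bigl(\sum_PE_{P,H}^{1/2}\bigr)^2$. By Lemma~\ref{local-energy}, $E_{P,H}^{1/2}\ll\sqrt{PH}+\sqrt\gamma\,P\sqrt{P+H}$ is geometric in $P$, so $\sum_PE_{P,H}^{1/2}\ll\sqrt{NH}+\sqrt\gamma\,N^{3/2}$. Multiplying by the corresponding sum for each of the $O(1)$ neighbours $H'$ of $H$ and summing over $H$ gives $N^2+\gamma N^3\log(2N)$.

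The paper avoids any cross-box relation between $h_1$ and $h_2$. It bins the positive differences into cells $C_k$ of length $\gamma$, bounds the count by $3\sum_k(\sharp C_k)^2$, and splits each $\sharp C_k$ over all dyadic boxes $(P,H)$. The triangle inequality in $\ell^2$ then gives $\bigl(\sum_k(\sharp C_k)^2\bigr)^{1/2}\le\sum_{P,H}\sqrt{\sharp\mathcal E_{P,H}(\gamma)}\ll N+\sqrt\gamma\,N^{3/2}\log(2N)$.

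Your direct count can also be repaired, but only with an idea it currently lacks. By the symmetry $(n,h_1)\leftrightarrow(m,h_2)$ you may assume $h_2\ge h_1-2$, at the cost of a factor $2$. Then $h_2-h_1<\log(1+h_1/n)+1$, leaving at most $4+\log(1+h_1/n)$ values of $h_2$, each with at most $1+6\gamma N^2/h_1$ values of $m$. Since $\sum_{n,h_1\le N}\log(1+h_1/n)\ll N^2$, this gives $N^2+\gamma N^3\log(2N)$.
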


\begin{proof}
Let
\[
 E^+_{N,\gamma}=\left\{(n,h_1,m,h_2)\in\NN^{4}:
 n+h_1\le N,\ m+h_2\le N,\ |\Delta_h(n)-\Delta_{h_{2}}(m)|<\gamma\right\}.
\]
For each $k\in\ZZ$, define
\[
 C_k=\left\{(n,h_1)\in\NN^2:n+h_1\le N,\quad
              k\gamma\le \Delta_{h_1}(n)<(k+1)\gamma\right\}.
\]
Two difference values that differ by less than $\gamma$ lie in
the same bin or in adjacent bins. Hence Cauchy's inequality gives
\begin{equation}\label{eq:adjacent-bins}
 \sharp E^+_{N,\gamma}
 \le\sum_k (\sharp C_k)^2+2\sum_k (\sharp C_k \sharp C_{k+1})
 \le3\sum_k (\sharp C_k)^2.
\end{equation}
For dyadic $P,H\le N$, let $C_{P,H,k}$ denote the contribution to
$C_k$ from $P\le n<2P$ and $H\le h<2H$, retaining the condition
$n+h\le N$. Then
\[
 \sharp C_k=\sum_{P,H}\sharp C_{P,H,k},\qquad
 \sum_k (\sharp C_{P,H,k})^2\le\sharp\mathcal E_{P,H}(\gamma).
\]
The latter inequality follows because values in the same bin differ
by less than $\gamma$. By the triangle inequality in $\ell^2$,
\begin{equation*}
 \left(\sum_k  (\sharp C_k)^2\right)^{1/2}
 \le\sum_{P,H}\left(\sum_k (\sharp C_{P,H,k})^2\right)^{1/2}
 \le\sum_{P,H}\sqrt{\sharp\mathcal E_{P,H}(\gamma)}.
\end{equation*}
Lemma~\ref{local-energy} and
$\sqrt{P+H}\le\sqrt P+\sqrt H$ yield
\begin{align*}
 \sum_{P,H}\sqrt{\sharp\mathcal E_{P,H}(\gamma)}
 &\ll\sum_{P,H}\sqrt{PH}
       +\sqrt\gamma\sum_{P,H}P\sqrt{P+H}\\
 &\ll N+\sqrt\gamma\left(
       \sum_{P\le N}P^{3/2}\sum_{H\le N}1
       +\sum_{P\le N}P\sum_{H\le N}\sqrt H\right)\\
 &\ll N+\sqrt\gamma\,N^{3/2}\log(2N).
\end{align*}
All sums here are dyadic. Squaring and
using~\eqref{eq:adjacent-bins}--\eqref{de:energy}, we obtain
\begin{equation*}
 \sharp E^+_{N,\gamma}\ll N^2+\gamma N^3\log^2(2N).
\end{equation*}

To recover~\eqref{de:energy}, write its condition as
\[
 |(a_{n_1}-a_{n_2})-(a_{n_4}-a_{n_3})|<\gamma.
\]
Every positive difference exceeds $1$. Since $\gamma\le1$, a
positive, a negative and a zero difference cannot pair with one
another in this inequality. The $N$ diagonal ordered pairs
$(m,m)$, $1\le m\le N$, give the zero differences, and
the counts for two positive or two negative differences both equal
$\sharp E^+_{N,\gamma}$. Therefore
\[
 \sharp\mathcal{E}_{N}(\gamma)=N^2+2\sharp E^+_{N,\gamma}.
\]
This proves~\eqref{eq:energy-bound}; taking $\gamma=2/N$ gives
\eqref{eq:critical-energy}.
\end{proof}

\section{Localized Mellin reduction}\label{sec:reduction}

Fix $I=[a,b]\subset(0,\infty)$ and a real-valued even
$g\in C_c^\infty(\RR)$. We use the Fourier transform
\[
 \widehat g(\xi)=\int_\RR g(u)\ee(-\xi u)\dd u,
 \qquad \xi\in\RR,
\]
and write $\|g\|_{L^2(I)}=(\int_I|g(\alpha)|^2\dd\alpha)^{1/2}$.
The aim of this section is the following inequality.

\begin{proposition}\label{prop:reduction}
For every integer $N\ge2$,
\begin{equation}\label{eq:reduction}
 \int_I|R_N(g;\alpha)-\widehat g(0)|^2\dd\alpha
 \ll_{I,g}N^{-7/3}\sharp\mathcal{E}_N(2/N)+N^{-2}\log^4(2N).
\end{equation}
\end{proposition}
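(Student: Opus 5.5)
We begin by Poisson summation in $\ell$. For fixed $\alpha\in I$ and $x=a_n-a_m$ this gives
\[
 \sum_{\ell\in\ZZ}g\bigl(N(\alpha x+\ell)\bigr)=\frac1N\sum_{j\in\ZZ}\widehat g(j/N)\,\ee(j\alpha x).
\]
Hence
\[
 R_N(g;\alpha)=\frac{N-1}{N}\widehat g(0)+\frac{2}{N^2}\sum_{j\ge1}\widehat g(j/N)\sum_{1\le m\ne n\le N}\cos\bigl(2\pi j\alpha(a_n-a_m)\bigr).
\]
Here we use that $g$ is even, so $\widehat g$ is real and even. The term $\widehat g(0)/N$ contributes $O(N^{-2})$ to the mean square. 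The remaining task is to bound $N^{-4}\int_I|S_N(\alpha)|^2\,\dd\alpha$, where
\[
 S_N(\alpha)=\sum_{j\ge1}\widehat g(j/N)\sum_{n,h}\cos\bigl(2\pi j\alpha\Delta_h(n)\bigr),
\]
with the inner sum over $n+h\le N$. By the rapid decay of $\widehat g$ we may truncate at $j\le N^{1+\varepsilon}$ up to negligible error. We then split dyadically into blocks $j\asymp J$, $h\asymp H$ (so $\Delta_h(n)\asymp H$), and $n\asymp P$ if needed. There are $O(\log^2 N)$ blocks in $(J,H)$, so by Cauchy--Schwarz it suffices to bound each block's mean square, at a cost of a $\log^2$ factor. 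Afterwards we insert a smooth weight $w(\alpha)\ge\mathbf 1_I$ supported in a slightly larger compact subinterval of $(0,\infty)$.

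In each block the phase $j\alpha\Delta$ is a product, and we separate it by Mellin inversion. Write $\widehat g(j/N)\phi(j/J)=\frac1{2\pi i}\int \tilde G_J(s)\,j^{-s}\,\dd s$ and $\cos(2\pi y)\psi(y/(JH))$ similarly, where $\phi,\psi$ are the dyadic bump functions. The cosine factor then becomes
\[
 \int_{\RR} K(t)\,(j\alpha\Delta)^{-it}\,\dd t,
\]
where the Mellin transform $K$ of a localized cosine at scale $JH$ is, by stationary phase, concentrated on $|t|\asymp JH$ with $|K(t)|\ll (JH)^{-1/2}$, and it decays rapidly elsewhere. This yields
\[
 S_{J,H}(\alpha)\approx\int_{|t|\asymp JH}K(t)\,\alpha^{-it}\,\Bigl(\sum_{j\asymp J}\widehat g(j/N)j^{-it}\Bigr)\Bigl(\sum_{n,h}\Delta_h(n)^{-it}\Bigr)\,\dd t.
\]
We bound the frequency sum $\sum_{j\asymp J}\widehat g(j/N)j^{-it}$ uniformly for $t\asymp JH$ by the third-derivative (van der Corput) estimate. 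The phase $t\log j$ has third derivative $\asymp t/J^3=\lambda$, giving the bound $\ll J\lambda^{1/6}+J^{1/2}\lambda^{-1/6}$. The resulting savings of size $(H/J^2)^{1/6}$ are the source of the exponent $1/3$.

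For the difference factor, we apply Parseval in $\alpha$: with $\alpha=\ee^{u}$, the factor $\alpha^{-it}$ becomes $\ee^{-itu}$ and $w$ localizes $u$. The Fourier duality in $t$ converts $\int|\int F(t)\alpha^{-it}\,\dd t|^2 w\,\dd\alpha$ into roughly $\int|F(t)|^2\,\dd t$, up to a smoothing. We then use Cauchy--Schwarz to pull out the supremum of the $j$-sum. This leaves a mean square of the Dirichlet polynomial $\sum_{n,h}\Delta_h(n)^{-it}$ over $|t|\asymp JH$. Using the mean-square estimate in logarithmic coordinates ($\int_{T}^{2T}|\sum b_\lambda\lambda^{-it}|^2\,\dd t$ is controlled by $T$ times the number of pairs with $|\log\lambda-\log\lambda'|\ll 1/T$), the pairs in play satisfy $|\Delta-\Delta'|\ll H/T\asymp 1/J$. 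Summed over $J$ up to $N$, this is dominated by the count at scale $1/N$, namely $\sharp\mathcal{E}_N(2/N)$. Smaller $J$ would give coarser scales; there we use the trivial monotonicity $\sharp\mathcal E_N(\gamma)\le(\gamma N/2+1)\sharp\mathcal E_N(2/N)$, which follows by covering with intervals, together with the extra saving $(H/J^2)^{1/6}$ and the decay of $K$ when $J$ is small.

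Collecting the factors $N^{-4}$, $(JH)^{-1}$, $T\asymp JH$ and the van der Corput bound should yield $N^{-7/3}\sharp\mathcal E_N(2/N)$ for the main blocks. The tails—small $JH$ where stationary phase fails, and the non-stationary parts of $K$—contribute $N^{-2}\log^4(2N)$ after the two dyadic Cauchy--Schwarz steps. \textbf{Main obstacle:} making the Mellin separation rigorous with the localization in $\alpha$. One needs a precise bound for the Mellin kernel $K$ on $t\asymp JH$ and control of the cross terms in Parseval caused by the smooth weight. One also needs to ensure that the exponent bookkeeping in the van der Corput step uniformly gives $N^{-1/3}$ across all ranges of $(J,H)$, in particular when $J$ is small relative to $\sqrt H$, where the third-derivative bound is weakest. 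I would first try to handle that range by the trivial bound, combined with the coarser-scale count from Proposition~\ref{energy}.
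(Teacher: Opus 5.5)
Your plan matches the paper's proof in all essentials. The shared steps are: Poisson summation; dyadic blocks $j\asymp J$, $h\asymp H$; Mellin separation onto the band $|t|\asymp JH$; the third-derivative bound $J^{2/3}H^{1/6}+J^{5/6}H^{-1/6}$ for the $j$-sum; and $\int_{|t|\asymp JH}|Z_H|^2\,\dd t\ll JH(1+N/J)\,\sharp\mathcal E_N(2/N)$, obtained from a pair count in logarithmic coordinates (made rigorous by a Gaussian majorant) plus covering.

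The one genuine variation is the Mellin step. You transform the $\alpha$-free function $\cos(2\pi y)\psi(y/X)$, with $\psi=1$ on $[a,8b]$. Then $\alpha$ enters only through $\alpha^{-it}$, and a single Plancherel in $u=\log\alpha$ gives $\int_I|S_{J,H}|^2\,\dd\alpha\ll\int_\RR|K(t)D_J(t)Z_H(t)|^2\,\dd t$. The factor $X^{-1}$ then comes from the stationary-phase (second-derivative) bound $|K(t)|\ll X^{-1/2}$. The paper instead transforms $\psi(x/X)\ee(\alpha x)$ and discards $t\notin\mathcal T_X$ by integrating by parts. It extracts $X^{-1}$ from Parseval in $\alpha$, Parseval in $\log x$, and $|\psi(x/X)|^2/x\le 2/X$, with no stationary-phase input. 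Both routes work. In yours there are no cross terms to control: bound $\mathbf 1_I(e^u)e^u\le b$ before applying Plancherel, so no smooth weight is needed at all.

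Two bookkeeping points must be fixed to reach the bound as stated.

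\emph{Cauchy--Schwarz over blocks.} Applying Cauchy--Schwarz over the $O(\log^2 N)$ blocks puts $\log^2(2N)$ on the main term, so you would only get $N^{-7/3}\sharp\mathcal E_N(2/N)\log^2(2N)$. Use Minkowski instead, $\|S_N\|_{L^2(I)}\le\sum_{J,H}\|S_{J,H}\|_{L^2(I)}$, as the paper does. The main-term block bounds $(1+J/N)^{-2}(1+N/J)^{1/2}\bigl(J^{2/3}H^{1/6}+J^{5/6}H^{-1/6}\bigr)\sqrt{\sharp\mathcal E_N(2/N)}$ are geometric in $J$ and $H$ and sum to $O\bigl(N^{5/6}\sqrt{\sharp\mathcal E_N(2/N)}\bigr)$. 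Only the $O(N)$ tail of each block accumulates a factor $\log^2(2N)$, which squares to the $N^{-2}\log^4(2N)$ term.

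\emph{Decay for large $J$.} The factor $(1+J/N)^{-2}$ must be carried along; it comes from partial summation of $\widehat g(j/N)$ against the unweighted partial sums of $j^{-it}$. Without it, the range $N<J\le N^{1+\varepsilon}$ costs a factor $N^{O(\varepsilon)}$.

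By contrast, the range $J\ll\sqrt H$ that you flag needs no separate treatment. There $\lambda\gg1$, so the bound $J\lambda^{1/6}$ is at least a constant times the trivial bound $J$ and holds automatically, and the resulting block bounds are still geometric in $J$ and $H$. Invoking Proposition~\ref{energy} there is unnecessary. It would also make the reduction depend on the arithmetic estimate rather than on $\sharp\mathcal E_N(2/N)$ alone.
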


\subsection{Fourier expansion and dyadic decomposition}

Pairing the positive and negative differences in
\eqref{eq:smooth-statistic}, and using the evenness of $g$, gives
\[
 R_N(g;\alpha)=\frac2N
 \sum_{\substack{n,h\ge1\\n+h\le N}}\sum_{\ell\in\ZZ}
       g\bigl(N(\alpha \Delta_h(n)+\ell)\bigr).
\]
Poisson summation in $\ell$ yields the absolutely convergent identity
\[
 R_N(g;\alpha)=\frac2{N^2}\sum_{j\in\ZZ}\widehat g(j/N)
        \sum_{\substack{n,h\ge1\\n+h\le N}}\ee(\alpha j\Delta_h(n)).
\]
The zero frequency contributes $(1-N^{-1})\widehat g(0)$.
Since $\widehat g$ is real and even, the terms at opposite
frequencies are complex conjugates. Moreover, for $A>1$,
\[
 \sum_{j>N^2}|\widehat g(j/N)|\ll_{g,A}N^{2-A}.
\]
Thus, uniformly for $\alpha\in\RR$,
\begin{equation}\label{eq:centering}
 R_N(g;\alpha)-\widehat g(0)
 =\operatorname{Re}E_N(\alpha)-\frac{\widehat g(0)}N+O_g(N^{-2}),
\end{equation}
where
\begin{equation*}
 E_N(\alpha)=\frac4{N^2}\sum_{1\le j\le N^2}\widehat g(j/N)
       \sum_{\substack{n,h\ge1\\n+h\le N}}\ee(\alpha j\Delta_h(n)).
\end{equation*}

For dyadic $H\le N$, define
\[
 \mathcal B_H=\{(n,h)\in\NN^2:H\le h<2H,\ n+h\le N\}.
\]
We have $\#\mathcal B_H\le NH$ and
$H\le \Delta_h(n)\le4H$ on $\mathcal B_H$.
For dyadic $J\le N^2$, put
\begin{equation}\label{eq:block-definition}
 F_{H,J}(\alpha)=
 \sum_{\substack{J\le j<2J\\j\le N^2}}\widehat g(j/N)
       \sum_{(n,h)\in\mathcal B_H}\ee(\alpha j\Delta_h(n)).
\end{equation}
Then
\begin{equation}\label{eq:block-minkowski}
 E_N=\frac4{N^2}\sum_{H,J}F_{H,J},\qquad
 \|E_N\|_{L^2(I)}\le\frac4{N^2}\sum_{H,J}\|F_{H,J}\|_{L^2(I)}.
\end{equation}
All sums over $H,J$ below are over these dyadic ranges.

\subsection{Mellin localization and Parseval's identity}

For $t\in\RR$, define the finite sums
\begin{equation*}
 D_J(t)=\sum_{\substack{J\le j<2J\\j\le N^2}}
                  \widehat g(j/N)j^{it},\qquad
 Z_H(t)=\sum_{(n,h)\in\mathcal B_H}\Delta_h(n)^{it},
\end{equation*}
where $x^{it}=\exp(it\log x)$ for $x>0$.

\begin{lemma}\label{lem:mellin}
For all $H,J$ in the above ranges,
\begin{equation*}
 \|F_{H,J}\|_{L^2(I)}
 \ll_{I,g}(JH)^{-1/2}
       \left(\int_{\pi aJH/2}^{64\pi bJH}|D_J(t)Z_H(t)|^2\dd t\right)^{1/2}+N.
\end{equation*}
\end{lemma}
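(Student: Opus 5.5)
The plan is to separate the frequency variable $j$ from the gap variable $\Delta_h(n)$ by a Mellin representation of the phase $\ee(x)$. This representation only needs to be valid on the range of $x=\alpha j\Delta_h(n)$ that actually occurs. Afterwards we use Plancherel in the logarithmic variable $u=\log\alpha$.

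\emph{Localizing and Mellin-inverting the phase.} For $\alpha\in I$, $J\le j<2J$ and $(n,h)\in\mathcal B_H$ we have $x=\alpha j\Delta_h(n)\in[aJH,8bJH]$. Fix $\psi_0\in C_c^\infty(\RR)$ with $\psi_0=1$ on $[a,8b]$ and support in $[a/2,16b]$, and set $\psi(x)=\psi_0(x/JH)$. On the relevant range, $\ee(x)=\psi(x)\ee(x)$, so Mellin inversion gives
\[
\ee(x)=\frac1{2\pi}\int_\RR M(t)\,x^{it}\,\dd t,\qquad M(t)=\int_0^\infty\psi(x)\ee(x)x^{-it}\frac{\dd x}{x}.
\]
The phase of $M(t)$ is $\phi(x)=2\pi x-t\log x$, with $\phi'(x)=2\pi-t/x$ and $|\phi''(x)|=|t|/x^2\asymp 1/(JH)$ when $|t|\asymp JH$. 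We need two estimates for $M$.
\begin{itemize}
\item In the band $\pi aJH/2\le t\le 64\pi bJH$, the second derivative test (or van der Corput with the smooth amplitude $\psi(x)/x$) gives $|M(t)|\ll_I (JH)^{-1/2}$.
\item Outside the band, $|\phi'(x)|\gg_I \max(1,|t|/JH)$ on $\operatorname{supp}\psi$. Here $t<0$ or $t$ lies below the band gives $\phi'\ge\pi$, and $t$ above the band gives $-\phi'\gg t/JH$. Repeated integration by parts, with each derivative of $\psi(x)/x$ costing $(JH)^{-1}$, then gives $|M(t)|\ll_{I,A}(JH+|t|)^{-A}(JH)^{A}$ times small factors. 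In particular we get $|M(t)|\ll_I(1+|t|)^{-2}(JH)^{-1}$ off the band after two integrations by parts.
\end{itemize}

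\emph{Inserting into $F_{H,J}$.} Since $x^{it}=\alpha^{it}j^{it}\Delta_h(n)^{it}$, for $\alpha\in I$ we get
\[
F_{H,J}(\alpha)=\frac1{2\pi}\int_\RR M(t)D_J(t)Z_H(t)\,\alpha^{it}\,\dd t.
\]
Split the integral into the band $\mathcal T$ and its complement. For the band part, substitute $\alpha=\ee^{u}$. Then $\dd\alpha\asymp_I\dd u$, and the integral becomes a Fourier transform in $u$ of $M D_J Z_H\mathbf 1_{\mathcal T}$. Extending the $u$-integral from $\log I$ to $\RR$ and applying Plancherel gives
\[
\Big\|\int_{\mathcal T}\cdots\Big\|_{L^2(I)}\ll_I\Big(\int_{\mathcal T}|M(t)|^2|D_JZ_H|^2\dd t\Big)^{1/2}\ll_I (JH)^{-1/2}\Big(\int_{\mathcal T}|D_JZ_H|^2\dd t\Big)^{1/2}.
\]
This is the main term of Lemma~\ref{lem:mellin}.

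\emph{The off-band part.} Here we use the trivial bounds $|D_J(t)|\le\sum_{J\le j<2J}|\widehat g(j/N)|\ll_g\min(J,N)\le J$ and $|Z_H(t)|\le\#\mathcal B_H\le NH$. The same Plancherel step, or simply the pointwise bound together with $|I|<\infty$, shows the contribution is
\[
\ll_I JNH\Big(\int_{\RR}(1+|t|)^{-4}(JH)^{-2}\dd t\Big)^{1/2}\ll N.
\]
This is the second term.

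The main obstacle is the uniform Mellin estimate for $M(t)$. The band bound has to hold with constants depending only on $I$ and $\psi_0$. The off-band decay has to carry the factor $(JH)^{-1}$, which is exactly what makes the error $\ll N$ regardless of $J$ and $H$. This factor must also survive when $JH$ is small, i.e.\ $J=H=1$, where the band has bounded length and the estimates are essentially trivial. I would handle this by writing everything in the rescaled variable $x=JHy$: then $M(t)=\int\psi_0(y)\ee(JHy)y^{-it}\,\dd y/y$, and both estimates become standard oscillatory-integral bounds with large parameter $JH$. In the degenerate case $JH\ll1$, the trivial bound $|F_{H,J}|\ll JNH\ll N$ suffices directly.
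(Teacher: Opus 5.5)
Your argument is correct, but it takes a different route from the paper's.

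\textbf{How the paper does it.} The paper localizes only in $x=j\Delta_h(n)$, via $\psi(x/X)$. Its Mellin kernel $K_X(\alpha,t)$ therefore depends on $\alpha$, and the off-band part is removed by two integrations by parts that are uniform in $\alpha\in I$. For the band part, the paper writes $F^{\loc}_{H,J}$ as a Fourier transform in $\alpha$ of $\psi(x/X)A_X(x)/x$. It then applies Parseval in $\alpha$ over all of $\RR$, followed by Parseval in $\log x$. The factor $(JH)^{-1}$ comes from $\psi(x/X)^2/x\le 2/X$, so no pointwise estimate of the kernel inside the band is needed.

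\textbf{How your route differs.} You localize in the full phase $\alpha j\Delta_h(n)$, using $\psi_0=1$ on $[a,8b]$. This makes $M(t)$ independent of $\alpha$, and $F_{H,J}$ becomes a Mellin transform in $\alpha$ through the factor $\alpha^{it}$. A single Plancherel in $u=\log\alpha$ then gives the main term. The cost is that you need the second-derivative (van der Corput) bound $|M(t)|\ll_I (JH)^{-1/2}$ on the band, which does hold since $|\phi''|\gg_I (JH)^{-1}$ there. Your route is shorter and keeps the off-band analysis to a single function of $t$. The paper's route avoids any stationary-phase input at the price of a double Parseval step.

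\textbf{One small slip.} Two integrations by parts give $|M(t)|\ll_I (JH+|t|)^{-2}$, not $(1+|t|)^{-2}(JH)^{-1}$. These differ by a factor of order $JH$ near $|t|\asymp JH$, and the latter bound needs a third integration by parts. This does not affect the conclusion. The pointwise bound $JNH\int_\RR (JH+|t|)^{-2}\,\dd t\ll N$ already gives the error term $N$, exactly as in the paper.
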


\begin{proof}
Put $X=JH$ and choose a fixed function
$\psi\in C_c^\infty((1/2,16))$ such that $0\le\psi\le1$ and
$\psi=1$ on $[1,8]$. On the summation range
in~\eqref{eq:block-definition}, $X\le j\Delta_h(n)\le8X$.
For $\alpha,t\in\RR$, define
\[
 K_X(\alpha,t)=\int_0^\infty
       \psi(x/X)\ee(\alpha x)x^{-it}\frac{\dd x}{x}.
\]
Mellin inversion, equivalently Fourier inversion after $x=e^u$,
gives
\[
 \psi(x/X)\ee(\alpha x)
       =\frac1{2\pi}\int_\RR K_X(\alpha,t)x^{it}\dd t.
\]
For fixed $\alpha,X$, the kernel $K_X(\alpha,t)$ is rapidly
decreasing in $t$.
Substituting $x=j\Delta_h(n)$ and interchanging finite sums with the
integral, we obtain
\begin{equation*}
 F_{H,J}(\alpha)=\frac1{2\pi}
       \int_\RR K_X(\alpha,t)D_J(t)Z_H(t)\dd t.
\end{equation*}

Let $\mathcal T_X=[\pi aX/2,64\pi bX]$.
The substitution $x=Xe^u$ writes the kernel as
\[
 K_X(\alpha,t)=X^{-it}\int_\RR\psi(e^u)
       \exp\bigl(i(2\pi\alpha Xe^u-tu)\bigr)\dd u.
\]
For $\alpha\in I$ and $u$ on the fixed support of $\psi(e^u)$,
\[
 \pi aX\le2\pi\alpha Xe^u\le32\pi bX.
\]
Consequently, if $t\notin\mathcal T_X$, the phase
$\phi(u)=2\pi\alpha Xe^u-tu$ satisfies
\[
 |\phi'(u)|\gg_I X+|t|,\qquad
 |\phi^{(r)}(u)|\ll_{I,r}X\quad(r\ge2).
\]
Two integrations by parts, using the compact support of the
amplitude, give
\begin{equation*}
 |K_X(\alpha,t)|\ll_I(X+|t|)^{-2}
 \qquad(\alpha\in I,\ t\notin\mathcal T_X).
\end{equation*}
The trivial bounds $|D_J(t)|\ll_g J$ and $|Z_H(t)|\le NH$ imply
\[
 \int_{\RR\setminus\mathcal T_X}|K_XD_JZ_H|\dd t
 \ll_{I,g}JNH\int_\RR(X+|t|)^{-2}\dd t
 \ll_{I,g}N.
\]
Define, for all $\alpha\in\RR$,
\[
 F_{H,J}^{\loc}(\alpha)=\frac1{2\pi}
       \int_{\mathcal T_X}K_X(\alpha,t)D_J(t)Z_H(t)\dd t.
\]
Then
\begin{equation}\label{eq:local-approximation}
 F_{H,J}(\alpha)=F_{H,J}^{\loc}(\alpha)+O_{I,g}(N)
 \qquad(\alpha\in I).
\end{equation}

For $x>0$, put
\[
 A_X(x)=\frac1{2\pi}\int_{\mathcal T_X}
                    D_J(t)Z_H(t)x^{-it}\dd t.
\]
Interchanging the integrals shows that
\[
 F_{H,J}^{\loc}(\alpha)=\int_0^\infty
                  \frac{\psi(x/X)}{x}A_X(x)\ee(\alpha x)\dd x.
\]
The amplitude on the right, extended by zero to $x\le0$, belongs
to $L^2(\RR)$. Parseval's identity therefore gives
\begin{equation}\label{eq:first-parseval}
 \int_\RR|F_{H,J}^{\loc}(\alpha)|^2\dd\alpha
 =\int_0^\infty\frac{|\psi(x/X)|^2}{x^2}|A_X(x)|^2\dd x.
\end{equation}
The function $\mathbf1_{\mathcal T_X}D_JZ_H$ is also in $L^2(\RR)$.
Applying Parseval after $u=\log x$, with the Fourier convention
$\exp(-itu)$, yields
\begin{equation}\label{eq:second-parseval}
 \int_0^\infty|A_X(x)|^2\frac{\dd x}{x}
 =\frac1{2\pi}\int_{\mathcal T_X}|D_J(t)Z_H(t)|^2\dd t.
\end{equation}
Since $|\psi(x/X)|^2/x\le2/X$, equations
\eqref{eq:first-parseval} and~\eqref{eq:second-parseval} imply
\[
 \int_\RR|F_{H,J}^{\loc}(\alpha)|^2\dd\alpha
 \le\frac1{\pi X}\int_{\mathcal T_X}|D_J(t)Z_H(t)|^2\dd t.
\]
Restricting the left side to $I$ and
using~\eqref{eq:local-approximation} proves the lemma.
\end{proof}

\subsection{The integer frequency sum}

We record the third derivative estimate in a form that is uniform in
both the location and the length of the summation interval.

\begin{lemma}\label{lem:third-derivative}
Let $u\in\ZZ$, $M\in\NN$, $\lambda>0$ and $A_0\ge1$.
Suppose that $q:[u+1,u+M]\to\RR$ is $C^3$ and
\[
 \lambda\le |q'''(x)|\le A_0\lambda
\]
throughout its domain. Then
\begin{equation}\label{eq:third-derivative}
 \left|\sum_{m=1}^M\ee(q(u+m))\right|
 \ll_{A_0}M\lambda^{1/6}+M^{1/2}\lambda^{-1/6}.
\end{equation}
\end{lemma}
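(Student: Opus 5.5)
We derive this bound from the van der Corput $A$-process followed by the second derivative test, keeping track of the fact that the constants depend only on $A_0$. Nothing about $u$ or $M$ beyond the length $M$ should enter. Translating $m\mapsto m$ with $\tilde q(x)=q(u+x)$, we may assume $u=0$ and $q$ is defined on $[1,M]$.

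First we dispose of trivial ranges. If $\lambda\ge1$, the right side of \eqref{eq:third-derivative} is $\ge M$, which exceeds the trivial bound, so we may assume $\lambda<1$. If $M\le\lambda^{-1/3}$, then $M\le M^{1/2}\lambda^{-1/6}$ and the trivial bound again suffices. So assume $\lambda^{-1/3}<M$.

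We use the Weyl--van der Corput inequality: for any integer $1\le L\le M$,
\[
 \Bigl|\sum_{m=1}^M\ee(q(m))\Bigr|^2\ll\frac{M^2}{L}+\frac{M}{L}\sum_{1\le r<L}\Bigl|\sum_{m\in\mathcal I_r}\ee\bigl(q(m+r)-q(m)\bigr)\Bigr|,
\]
where $\mathcal I_r=[1,M-r]\cap\ZZ$. For fixed $r$, the differenced phase $q_r(x)=q(x+r)-q(x)$ has second derivative $q_r''(x)=\int_0^r q'''(x+s)\,\dd s$. Since $q'''$ is continuous and never vanishes, it has constant sign, so $r\lambda\le|q_r''|\le A_0 r\lambda$ on $[1,M-r]$.

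The second derivative test (Kusmin--Landau plus Euler--Maclaurin, as in Titchmarsh Thm.~5.9 or Graham--Kolesnik Thm.~2.2), applied with constants depending only on $A_0$, gives
\[
 \Bigl|\sum_{m\in\mathcal I_r}\ee(q_r(m))\Bigr|\ll_{A_0}M(r\lambda)^{1/2}+(r\lambda)^{-1/2}.
\]
Summing over $r<L$ yields $\ll ML^{3/2}\lambda^{1/2}+L^{1/2}\lambda^{-1/2}$. Hence
\[
 |S|^2\ll_{A_0}\frac{M^2}{L}+M^2L^{1/2}\lambda^{1/2}+ML^{-1/2}\lambda^{-1/2}.
\]

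Now choose $L\asymp\lambda^{-1/3}$, which is at most $M$ in the remaining range. The first two terms both become $M^2\lambda^{1/3}$, and the third becomes $M\lambda^{-1/3}$. Therefore
\[
 |S|^2\ll_{A_0} M^2\lambda^{1/3}+M\lambda^{-1/3},
\]
and taking square roots gives \eqref{eq:third-derivative}.

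The main obstacle is bookkeeping rather than ideas. The second derivative test must be invoked in a form that is uniform in the interval and requires only the two-sided bound on $q_r''$. One must also check that the choice $L=\lfloor\lambda^{-1/3}\rfloor$ is admissible, that is, $1\le L\le M$. Both points are handled by the preliminary reductions above: $\lambda<1$ ensures $L\ge1$, and $\lambda^{-1/3}<M$ ensures $L\le M$.
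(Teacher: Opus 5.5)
Your proposal is correct and follows the paper's proof essentially line by line. It uses the same trivial-range reduction to $M^{-3}<\lambda<1$ and Weyl--van der Corput differencing with shift length $\lfloor\lambda^{-1/3}\rfloor$; the constant-sign observation then gives $r\lambda\le|q_r''|\le A_0r\lambda$, and the second derivative test is applied to each differenced sum, with the terms balanced to give $|S|^2\ll_{A_0}M^2\lambda^{1/3}+M\lambda^{-1/3}$.
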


\begin{proof}
The trivial bound suffices if $\lambda\ge1$ or $\lambda\le M^{-3}$.
Assume $M^{-3}<\lambda<1$ and put
$Q=\lfloor\lambda^{-1/3}\rfloor$, so that
$1\le Q\le M$ and $Q\asymp\lambda^{-1/3}$.
Writing $S$ for the sum in~\eqref{eq:third-derivative}, the
Weyl--van der Corput inequality, in the form stated by Graham and
Kolesnik~\cite[Lemma~2.5 and (2.3.2), p.~10]{GK}, gives
\[
 |S|^2\ll\frac{M^2}{Q}
 +\frac{M}{Q}\sum_{r=1}^{Q-1}
 \left|\sum_{m=1}^{M-r}
 \ee\bigl(q(u+m+r)-q(u+m)\bigr)\right|.
\]
Since $q'''$ is continuous and nonzero, its sign is constant.
On $[u+1,u+M-r]$, the differenced phase
$q_r(x)=q(x+r)-q(x)$ satisfies
\[
 q_r''(x)=\int_0^r q'''(x+v)\dd v,
 \qquad r\lambda\le |q_r''(x)|\le A_0r\lambda.
\]
The second derivative estimate stated by Graham and
Kolesnik~\cite[Theorem~2.2, p.~8]{GK}
bounds the inner sum by
$O_{A_0}\bigl(M(r\lambda)^{1/2}+(r\lambda)^{-1/2}\bigr)$.
The first summand, if excluded by the half-open interval convention
in that estimate, is absorbed by the same bound; the one-term
case is immediate. Therefore
\begin{align*}
 |S|^2
 &\ll_{A_0}\frac{M^2}{Q}
       +M^2\lambda^{1/2}Q^{1/2}
       +M\lambda^{-1/2}Q^{-1/2}\\
 &\ll_{A_0}M^2\lambda^{1/3}+M\lambda^{-1/3}.
\end{align*}
Taking square roots proves the lemma.
\end{proof}

\begin{lemma}\label{lem:frequency}
For all $H,J$ in the above ranges,
\begin{equation}\label{eq:frequency-bound}
 \sup_{\pi aJH/2\le t\le64\pi bJH}|D_J(t)|
 \ll_{I,g}(1+J/N)^{-2}
       \bigl(J^{2/3}H^{1/6}+J^{5/6}H^{-1/6}\bigr).
\end{equation}
\end{lemma}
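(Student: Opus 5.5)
We bound $D_J(t)=\sum_{J\le j<2J,\,j\le N^2}\widehat g(j/N)\,j^{it}$ by removing the smooth weight $\widehat g(j/N)$ with partial summation, and then applying Lemma~\ref{lem:third-derivative} to the pure exponential sums $\sum_{j}\ee(q(j))$ with phase $q(x)=\frac{t}{2\pi}\log x$.

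\textbf{The exponential sums.} For $x\in[J,2J]$ we have
\[
 q'''(x)=\frac{t}{\pi x^3},
\]
so with $\lambda=t/(8\pi J^3)$ we get $\lambda\le |q'''(x)|\le 8\lambda$. Thus Lemma~\ref{lem:third-derivative} applies with $A_0=8$. Its proof is stated uniformly in the location $u$ and the length $M\le J$, so it applies to every subinterval $[J,y]$ with $y\le\min(2J,N^2+1)$.

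On the band $\pi aJH/2\le t\le 64\pi bJH$ we have $\lambda\asymp_I H/J^2$. Hence every partial sum satisfies
\[
 \Bigl|\sum_{J\le j\le y}j^{it}\Bigr|
 \ll_I J\Bigl(\frac{H}{J^2}\Bigr)^{1/6}+J^{1/2}\Bigl(\frac{H}{J^2}\Bigr)^{-1/6}
 = J^{2/3}H^{1/6}+J^{5/6}H^{-1/6}.
\]
This bound is uniform in $y$ and in $t$ on the band.

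\textbf{Removing the weight.} Write $w(x)=\widehat g(x/N)$ and apply Abel summation over the range. The result is bounded by
\[
 \Bigl(|w(y_{\max})|+\int_J^{2J}|w'(x)|\,\dd x\Bigr)\sup_y\Bigl|\sum_{J\le j\le y}j^{it}\Bigr|.
\]
Since $g\in C_c^\infty$, both $\widehat g$ and $\widehat g'$ are Schwartz. So $|w(x)|\ll_g(1+J/N)^{-2}$ for $x\asymp J$. Also
\[
 \int_J^{2J}|w'(x)|\,\dd x\le \frac{J}{N}\sup_{\xi\asymp J/N}|\widehat g'(\xi)|\ll_g \frac{J}{N}\Bigl(1+\frac JN\Bigr)^{-3}\le\Bigl(1+\frac JN\Bigr)^{-2}.
\]
Multiplying this total variation bound by the uniform partial-sum bound gives \eqref{eq:frequency-bound}.

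\textbf{Main point to check.} The only delicate step is that $\lambda$ has the right size uniformly on the Mellin band, so that the constants depend only on $I$. This holds because $t\asymp_I JH$, and the band is exactly the one produced by Lemma~\ref{lem:mellin}.

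No separate treatment of $\lambda\ge1$ or $\lambda\le M^{-3}$ is needed, since Lemma~\ref{lem:third-derivative} covers those cases through the trivial bound. When $J\le 1$ the range has $O(1)$ terms, and the bound is trivially consistent with the right side.
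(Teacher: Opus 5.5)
Your proposal is correct and follows the paper's proof essentially step for step. You apply Lemma~\ref{lem:third-derivative} with $A_0=8$ and $\lambda=t/(8\pi J^3)$ uniformly to initial segments of $[J,2J)$, then remove the weight $\widehat g(j/N)$ by Abel summation using the decay of $\widehat g$ and $\widehat g'$; the only cosmetic difference is that you substitute $t\asymp_I JH$ (so $\lambda\asymp_I H/J^2$) before evaluating the bound rather than after.
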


\begin{proof}
For $t>0$, the phase $q(x)=t\log x/(2\pi)$ satisfies
\[
 \frac{t}{8\pi J^3}\le q'''(x)=\frac{t}{\pi x^3}
       \le\frac{t}{\pi J^3}\qquad(J\le x\le2J).
\]
Applying Lemma~\ref{lem:third-derivative} with $A_0=8$ and
$\lambda=t/(8\pi J^3)$ to each initial segment of the integers
in $[J,2J)$, which contains at most $J$ terms, we obtain
\begin{equation*}
\begin{aligned}
 \sup_{J\le v<2J}\left|\sum_{J\le j\le v}j^{it}\right|
 &\ll J(t/J^3)^{1/6}+J^{1/2}(t/J^3)^{-1/6}\\
 &=J^{1/2}t^{1/6}+Jt^{-1/6}.
\end{aligned}
\end{equation*}
The one-term case is immediate.
The rapid decay of $\widehat g$ and $\widehat g'$ implies
\[
 \sup_{J\le x\le2J}|\widehat g(x/N)|
 +\frac1N\int_J^{2J}|\widehat g'(x/N)|\dd x
 \ll_g(1+J/N)^{-2}.
\]
Abel summation, terminating at $\min(2J-1,N^2)$, now yields
\[
 |D_J(t)|\ll_g(1+J/N)^{-2}
       \bigl(J^{1/2}t^{1/6}+Jt^{-1/6}\bigr).
\]
Since $t\asymp_I JH$ on the specified interval, this is
\eqref{eq:frequency-bound}.
\end{proof}

\subsection{The mean square of the gap sum}

\begin{lemma}\label{lem:gap-mean-square}
For all $H,J$ in the above ranges,
\begin{equation*}
 \int_{\pi aJH/2}^{64\pi bJH}|Z_H(t)|^2\dd t
 \ll_I H(J+N)\sharp\mathcal{E}_N(2/N).
\end{equation*}
\end{lemma}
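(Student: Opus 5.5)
We bound the integral by smoothing in $t$ and applying a large-sieve type mean value estimate in logarithmic coordinates. Put $T=64\pi bJH$ and $\delta=1/T$. Take a fixed nonnegative Schwartz weight $w$ with $w\ge1$ on $[-1,1]$ and $\widehat w$ supported in $[-1,1]$; such a $w$ is obtained as $|\widehat\phi|^2$ rescaled, with $\phi$ compactly supported. Since $T\asymp_I JH$ and the lower limit $\pi aJH/2$ is positive, we have
\[
 \int_{\pi aJH/2}^{64\pi bJH}|Z_H(t)|^2\dd t\le\int_\RR w(t/T)|Z_H(t)|^2\dd t .
\]
Expanding the square gives
\[
 \sum_{(n,h),(m,y)\in\mathcal B_H}T\,\widehat w\!\left(\frac{T}{2\pi}\log\frac{\Delta_h(n)}{\Delta_y(m)}\right),
\]
up to normalization of the Fourier convention. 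Because $\widehat w$ is supported in $[-1,1]$ and bounded, only pairs with $|\log(\Delta_h(n)/\Delta_y(m))|\le 2\pi/T$ contribute, each contributing $O(T)$.

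Next we convert the logarithmic condition into an additive one. On $\mathcal B_H$ both gaps lie in $[H,4H]$, so
\[
 |\Delta_h(n)-\Delta_y(m)|\le 4H\cdot 2\pi/T\cdot(1+o(1))\ll_I 1/J .
\]
Each contributing pair therefore corresponds to a quadruple $(n+h,n,m+y,m)\in\{1,\dots,N\}^4$ with $|a_{n+h}-a_n-a_{m+y}+a_m|\ll_I 1/J$. This yields
\[
 \int|Z_H|^2\ll_I JH\cdot\sharp\{\text{pairs with gap difference}\le c_I/J\}.
\]

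It remains to compare this count with the count at scale $2/N$. If $c_I/J\le 2/N$, the count is at most $\sharp\mathcal E_N(2/N)$. This gives the bound $JH\,\sharp\mathcal E_N(2/N)$, which is at most $H(J+N)\sharp\mathcal E_N(2/N)$. If instead $c_I/J>2/N$, we use a covering argument. Partition the range of differences into bins of length $1/N$, as in the proof of Proposition~\ref{energy}. A tolerance $\gamma=c_I/J$ then covers $\ll 1+\gamma N\ll N/J$ neighbouring bins. Applying Cauchy--Schwarz over shifted bins, $\sum_k \sharp C_k\sharp C_{k+r}\le\sum_k(\sharp C_k)^2$, bounds the count by $\ll (N/J)\sum_k(\sharp C_k)^2\ll (N/J)\,\sharp\mathcal E_N(2/N)$. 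Multiplying by $T\asymp JH$ gives $HN\,\sharp\mathcal E_N(2/N)$. Combining the two cases yields the claimed $H(J+N)\sharp\mathcal E_N(2/N)$.

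The main obstacle is this last rescaling step. The bins must have length at most $2/N$ so that any two entries of one bin form a quadruple in $\mathcal E_N(2/N)$; with strict inequality this is guaranteed by taking bins of length $1/N$. The shifted-bin Cauchy--Schwarz must be arranged so that the number of shifts is $O(1+N/J)$ and not larger. The restriction to $h\in[H,2H)$ only reduces the count, since $\mathcal B_H$-pairs embed injectively into quadruples. The choice of $w$ with compactly supported Fourier transform is routine.
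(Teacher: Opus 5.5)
Your argument is correct and follows essentially the same route as the paper. Both majorize the $t$-range by a nonnegative weight at scale $T\asymp_I JH$ and expand $|Z_H|^2$. Both then use that all gaps lie in $[H,4H]$ to convert logarithmic closeness into $|\Delta_h(n)-\Delta_y(m)|\ll_I 1/J$. Finally, both bin the gaps at scale $1/N$ and apply $\sum_k c_kc_{k+r}\le\sum_k c_k^2\le\sharp\mathcal E_N(2/N)$ over $O(1+N/J)$ shifts $r$.

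The only difference is cosmetic. The paper uses the Gaussian weight $e^{-t^2/T^2}$ and sums its Gaussian decay over all shifts $r$, whereas your band-limited majorant gives a sharp cutoff. One small fix: since $\mathcal E_N(2/N)$ is defined with strict inequality, state your first case as $c_I/J<2/N$, or simply run the binning argument in both cases.
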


\begin{proof}
Put $T=64\pi bJH$. Since $\exp(-t^2/T^2)\ge\exp(-1)$ on the interval
of integration, the Gaussian Fourier transform gives
\begin{align*}
 \int_{\pi aJH/2}^{64\pi bJH}|Z_H(t)|^2\dd t
 &\ll\int_\RR e^{-t^2/T^2}|Z_H(t)|^2\dd t\\
 &=\sqrt\pi\,T\sum_{(n,h),(m,y)\in\mathcal B_H}
       \exp\left(-\frac{T^2}{4}
          \left(\log\frac{\Delta_h(n)}{\Delta_y(m)}\right)^2\right).
\end{align*}
Both differences lie in $[H,4H]$, so
\[
 \left|\log\frac{\Delta_h(n)}{\Delta_y(m)}\right|
 \ge\frac{|\Delta_h(n)-\Delta_y(m)|}{4H}.
\]
It follows that, for a fixed $\kappa_I>0$,
\begin{equation}\label{eq:gaussian-energy}
 \int_{\pi aJH/2}^{64\pi bJH}|Z_H(t)|^2\dd t
 \ll_I JH\sum_{(n,h),(m,y)\in\mathcal B_H}
        \exp\bigl(-\kappa_IJ^2|\Delta_h(n)-\Delta_y(m)|^2\bigr).
\end{equation}

Partition the differences into intervals of length $1/N$, and let
\[
 c_k=\#\{(n,h)\in\mathcal B_H:k/N\le \Delta_h(n)<(k+1)/N\},
 \qquad k\in\ZZ.
\]
Values in the same interval differ by less than $1/N$. Since
\[
 \Delta_h(n)-\Delta_y(m)=(a_{n+h}-a_n)-(a_{m+y}-a_{m}),
\]
and the map $(n,h,m,y)\mapsto(n+h,n,m,m+y)$ is injective with
all four coordinates in $\{1,\ldots,N\}$, we have
\begin{equation*}
 \sum_k c_k^2\le \sharp\mathcal{E}_N(2/N).
\end{equation*}
For every integer $r$, Cauchy's inequality also gives
\begin{equation*}
 \sum_k c_kc_{k+r}\le\sum_k c_k^2\le \sharp\mathcal{E}_N(2/N).
\end{equation*}
If two bins have indices differing by $r$, their elements are
separated by at least $(|r|-1)_+/N$, where $x_+=\max(x,0)$.
Consequently the sum on the right of~\eqref{eq:gaussian-energy}
is at most
\[
 \sharp\mathcal{E}_N(2/N)\sum_{r\in\ZZ}
    \exp\left(-\kappa_I(J/N)^2((|r|-1)_+)^2\right)
 \ll_I(1+N/J)\sharp\mathcal{E}_N(2/N).
\]
Here we used, for $A>0$,
\[
 \sum_{r\in\ZZ}e^{-A^2((|r|-1)_+)^2}
 =3+2\sum_{q\ge1}e^{-A^2q^2}
 \le3+2\int_0^\infty e^{-A^2x^2}\dd x
 =3+\frac{\sqrt\pi}{A}.
\]
Substitution into~\eqref{eq:gaussian-energy} proves the result.
\end{proof}

\subsection{Summation over the blocks}

\begin{proof}[Proof of Proposition~\ref{prop:reduction}]
By Lemmas~\ref{lem:mellin}, \ref{lem:frequency}
and~\ref{lem:gap-mean-square},
\begin{equation}\label{eq:block-norm}
 \|F_{H,J}\|_{L^2(I)}
 \ll_{I,g} w_J
       \bigl(J^{2/3}H^{1/6}+J^{5/6}H^{-1/6}\bigr)
       \sqrt{\sharp\mathcal{E}_N(2/N)}+N,
\end{equation}
where
\[
 w_J=(1+J/N)^{-2}(1+N/J)^{1/2}.
\]
The dyadic sums over $H$ satisfy
\[
 \sum_{H\le N}H^{1/6}\ll N^{1/6},\qquad
 \sum_{H\le N}H^{-1/6}\ll1.
\]
For any fixed $1/2<r<2$, the sum over $J$ satisfies
\begin{align*}
 \sum_{J\le N^2}w_JJ^r
 &\ll N^{1/2}\sum_{J\le N}J^{r-1/2}
       +N^2\sum_{J>N}J^{r-2}\\
 &\ll_r N^r.
\end{align*}
Taking $r=2/3$ and $r=5/6$ in these estimates,
and noting that there are $O(\log^2(2N))$ pairs of blocks,
we obtain from~\eqref{eq:block-norm}
\[
 \sum_{H,J}\|F_{H,J}\|_{L^2(I)}
 \ll_{I,g}N^{5/6}\sqrt{\sharp\mathcal{E}_N(2/N)}+N\log^2(2N).
\]
Thus~\eqref{eq:block-minkowski} gives
\[
 \|E_N\|_{L^2(I)}
 \ll_{I,g}N^{-7/6}\sqrt{\sharp\mathcal{E}_N(2/N)}+N^{-1}\log^2(2N).
\]
Squaring and using~\eqref{eq:centering} proves
\eqref{eq:reduction}.
\end{proof}

\section{Completion of the metric argument}\label{sec:completion}

\begin{proof}[Proof of Theorem~\ref{thm:mean-square}]
Combining Propositions~\ref{energy} and~\ref{prop:reduction},
we obtain
\begin{align*}
 \int_I|R_N(g;\alpha)-\int_\RR g(\alpha) \dd\alpha|^2\dd\alpha
 &\ll_{I,g}N^{-1/3}\log^2(2N)+N^{-2}\log^4(2N)\\
 &\ll_{I,g}N^{-1/3}\log^2(2N),
\end{align*}
which gives~\eqref{eq:mean-square}.
\end{proof}

\begin{proof}[Proof of Theorem~\ref{thm:main}]

Put \(N_j=\lfloor j^4\rfloor\). Theorem~\ref{thm:mean-square} and Chebyshev's inequality, together with the first Borel--Cantelli lemma, yield
\[
\lim_{j\to\infty}R_{N_j}(g;\alpha)=\int_{\mathbb{R}}g(\alpha)\,d\alpha
\]
for almost all \(\alpha\in I\).

Since \(N_{j+1}/N_j\to1\), we may pass from this subsequence to the full sequence. To this end, we approximate the indicator function \(\mathbf{1}_{[-s,s]}\) by smooth compactly supported even functions, verify the interpolation step for such smooth functions, and then pass to the limit (see \cite[Section 7]{TY} for the standard interpolation argument). This proves
\[
\lim_{N\to\infty}R_{N}(g;\alpha)=\int_{\mathbb{R}}g(\alpha)\,d\alpha
\]
for almost all \(\alpha\in I\).

Now apply this argument on each interval
\[
I_q:=[1/q,q],\qquad q=2,3,\dots
\]
Since \(I_q\uparrow(0,\infty)\), we obtain the desired assertion for almost every \(\alpha>0\). More precisely, for each \(q\) there is a null set \(E_q\subset I_q\) such that the convergence holds for every \(\alpha\in I_q\setminus E_q\). Hence
\[
E:=\bigcup_{q=2}^{\infty}E_q
\]
is again a null set, and the convergence holds for every \(\alpha\in(0,\infty)\setminus E\). Finally, using the symmetry
\[
R_N(s;-\alpha)=R_N(s;\alpha),
\]
we deduce the same convergence for almost every \(\alpha<0\). Therefore the assertion holds for almost every \(\alpha\in\mathbb{R}\).

\end{proof}

\bigskip

\noindent School of Mathematics and Statistics, Huazhong University of Science and Technology, Wuhan 430074, China.

\noindent\textit{Email address:} \texttt{tanbo@hust.edu.cn}

\bigskip

\noindent School of Mathematics and Statistics, Huazhong University of Science and Technology, Wuhan 430074, China.

\noindent\textit{Email address:} \texttt{zhangzhen\_@hust.edu.cn}
\bigskip

\noindent School of Mathematics and Statistics, Wuhan University of Technology, Wuhan 430070, China.

\noindent\textit{Email address:} \texttt{zhouql@whut.edu.cn}

\end{document}